\numberwithin{equation}{section}
\date{}
\begin{document}

\title{The Cauchy problem for $\shd$-modules on Ran spaces.}

\author{Giuseppe Bonavolont\`a}
\maketitle

\abstract{We will adopt an elementary approach to $\shd$-modules on Ran spaces in terms of two-limits; the aim here is to define the category of coherent $\shd$-modules, characteristic varieties and non-characteristic maps. An application will be the proof of the Cauchy-Kowaleski-Kashiwara theorem in this setting.}

\section{Introduction}
The pioneering work $\cite{BD04}$ on the geometric foundations of Quantum Field Theory and Conformal Field Theory stressed the importance played by so-called Ran spaces.
For example chiral, Lie* and factorization algebras can be considered as geometric objects living on such spaces (see $\cite{BD04}$). The most appropriate approach to the theory of $\shd$-modules on Ran spaces (as presented in $\cite{FG11}$) requires many sophisticated tools from Lurie's work on $(\infty,1)$-categories.
The main goal of this paper is to explore some definitions and properties of $\shd$-modules on Ran spaces employing the classical theory of prestacks and two limits ($\cite{SGA4}$). This simplified approach makes the theory accessible and sheds light on some definitions: the notions of characteristic varieties and of coherent $\shd$-modules are really natural in this language.
In paragraph $\S \ref{preliminaries}$ we recall some basics on the classical theory of $\shd$-modules and microlocal geometry; in $\S \ref{prestacks}$ we recall some definitions relative to prestacks, two-limits, etc.; in paragraph $\S \ref{paragraph Ran}$ we introduce the theory of modules on Ran manifolds by the use of $\S \ref{prestacks}$; as an application we prove the Cauchy-Kowaleski-Kashiwara theorem for Ran manifolds.

{\bf Acknowledgments.}
I am deeply grateful to Pierre Schapira for having called my attention on the study of coherent $\shd$-modules over Ran spaces.
I have much benefited from the reading of the unpublished manuscript \cite{GS06} and I wish to kindly thank the authors. Moreover I thank the Luxembourgian National Research Fund for support via AFR grant PhD 09-072.
\section{Preliminaries}\label{preliminaries}

\subsubsection*{Sheaves}
We shall mainly follow the notations of~\cite{KS90} for sheaves, that of \cite{KS06} for categories and that of \cite{Ka03} for $\shd$-modules.

Let $X$ be a real manifold and let  $\cor$ be a field.
One denotes by $\cor_X$ the constant sheaf on $X$ with stalk $\cor$
and  by $\md[\cor_X]$ the abelian category of sheaves  of $\cor$-modules on $X$.
We denote by $\Derb(\cor_X)$ the bounded derived category of $\md[\cor_X]$.
One simply calls an object of this category, ``a sheaf''.

We shall use the classical six operations on sheaves, the
internal hom $\hom$, the tensor product $\tens$, the direct image  $\oim{f}$, the proper direct image$\eim{f}$, the inverse image $\opb{f}$
and the derived functors $\rhom$, $\ltens$, $\roim{f}$, $\reim{f}$,  $\opb{f}$, as well as the extraordinary inverse image $\epb{f}$, right adjoint to $\reim{f}$.

We also use the notation $\etens$ for the external product: if $X$ and $Y$ are two manifolds and
where $q_1$ and $q_2$ denote the first and second projection on $X\times Y$, we set
for $F\in\md[\cor_X]$ and $G\in\md[\cor_Y]$, one sets
$F\etens G\eqdot \opb{q_1}F\tens\opb{q_2}G$.

 \subsubsection*{Microlocal geometry}
 For a real or complex manifold $X$, we denote by $\tau\cl TX\to X$ its tangent vector bundle and by
 $\pi\cl T^*X\to X$ its cotangent vector bundle. If $E\to X$ is a vector bundle, we identify $X$ with the zero-section.

 Let $f\cl X\to Y$ be  a morphism of  real or complex manifolds.
To $f$ are associated the tangent morphisms
\eq\label{diag:tgmor}
\xymatrix{
TX\ar[d]^-\tau\ar[r]^-{f'}&X\times_YTY\ar[d]^-\tau\ar[r]^-{f_\tau}&TY\ar[d]^-\tau\\
X\ar@{=}[r]&X\ar[r]^-f&Y.
}\eneq
By duality, we deduce the diagram:
\eq\label{diag:cotgmor}
&&\xymatrix{
T^*X\ar[d]^-\pi&X\times_Y\ar[d]^-\pi\ar[l]_-{f_d}\ar[r]^-{f_\pi}T^*Y
                                      & T^*Y\ar[d]^-\pi\\
X\ar@{=}[r]&X\ar[r]^-f&Y.
}\eneq
One sets
\eqn
&&T^*_XY\eqdot\ker f_d= \opb{f_d}(T^*_XX).
\eneqn

\begin{definition}
Let $\Lambda\subset T^*Y$ be a closed $\R^+$-conic subset. One says that
$f$ is non-characteristic for $\Lambda$, or else, $\Lambda$
is non-characteristic for $f$, if
\eqn
&&\opb{f_\pi}(\Lambda)\cap T^*_XY\subset X\times_YT^*_YY.
\eneqn
\end{definition}

\subsubsection*{$\sho$-modules}
Now assume that   $(X,\sho_X)$ is a complex manifold of complex dimension $d_{X}$. We denote by
$\md[\sho_X]$ the abelian category of sheaves of $\sho_{X}$-modules and by $\hom[\sho_X]$ and
$\tens[\sho_X]$ the internal hom and tensor product in this category.
The sheaf of rings  $\sho_{X}$ is Noetherian and we denote by $\mdcoh[\sho_X]$ the thick abelian subcategory of $\md[\sho_X]$ consisting of coherent sheaves.

We denote by $\Derb(\sho_X)$ the bounded derived category of  $\md[\sho_X]$ and by
$\Derb_\coh(\sho_X)$ the full triangulated subcategory consisting of objects with coherent cohomology.

Let $f\cl X\to Y$ be a morphism of complex manifolds.
We keep the notation $\oim{f}$ and $\eim{f}$ for the two direct image functors for $\sho$-modules and
we denote by $\spb{f}$ the inverse image functor for $\sho$-modules:
\eqn
&&\spb{f}\shg\eqdot \sho_X\tens[\opb{f}\sho_Y]\opb{f}\shg.
\eneqn
Hence we have the pair of adjoint functors $(f^{\ast},\oim{f})$
\eqn
\xymatrix{
\md[\sho_X]\ar@<+.7ex>^{\oim{f}}[r] & \md[\sho_{Y}]\ar@<+.7ex>^{\spb{f}}[l].
}
\eneqn
It follows that $f_{\ast}$ is left exact and $f^{\ast}$ is right exact. We denote by $\lspb{f}$ the left derived functor of
$\spb{f}$.

Given two manifolds $X$ and $Y$, we denote by $\detens$ the external product for $\sho$-modules. Hence,
\eqn
&&\shf\detens\shg\eqdot \sho_{X\times Y}\tens[\sho_X\etens\sho_Y](\shf\etens\shg).
\eneqn

 \subsubsection*{$\shd$-modules}
 We denote by $\shd_{X}$ the sheaf of rings of holomorphic differential operators, the subalgebra
 of ${\she nd}(\sho_{X})$ generated by $\sho_{X}$ and $\Theta_X$, the sheaf of holomorphic vector fields.

Unless otherwise specified, a $\shd_X$-module is
a left-$\shd_X$-module. Hence a right $\shd_X$-module is a $(\shd_X^\rop)$-module.
We denote by $\md[\shd_X]$ the abelian category of $\shd_X$-modules and by $\Derb(\shd_X)$ its bounded derived category.

Recall the operations
 \eqn
&&\hom[\sho_X]: \md[\shd_X]^\rop\times\md[\shd_X]\to\md[\shd_X],\\
&&\hom[\sho_X]: \md[\shd^\rop_X]^\rop\times\md[\shd^\rop_X]\to\md[\shd_X],\\
&&\tens[\sho_X]: \md[\shd_X]\times\md[\shd_X]\to\md[\shd_X],\\
&&\tens[\sho_X]: \md[\shd^\rop_X]\times\md[\shd_X]\to\md[\shd^\rop_X].
\eneqn
We denote by $\detens$ the external product for $\shd$-modules:
\eqn
&&\shm\detens\shn\eqdot \shd_{X\times Y}\tens[\shd_X\etens\shd_Y](\shm\etens\shn).
\eneqn
For a morphism $f:X\to Y$ of complex manifolds, we denote as usual
by $\shd_{X\rightarrow Y}$ the transfert bimodule, a $(\shd_X,f^{-1}\shd_{Y})$-bimodule
\eqn
&&\shd_{X\to Y}\eqdot\sho_X\otimes _{f^{-1}\sho_Y}f^{-1}\shd_{Y}.
\eneqn
One should be aware that the left $\shd_{X}$ structure of $\shd_{X\rightarrow Y}$ is not the one induced by
that of $\sho_X$.
We then have the inverse and direct image functors
\eqn
&& \dopb{f}:\Derb(\shd_Y)\rightarrow \Derb(\shd_X),\quad
\dopb{f}\shn=\shd_{X\to Y}\ltens[\opb{f}\shd_Y]\opb{f}\shn,\\
&&\doim{f}:\Derb(\shd^\rop_X)\rightarrow \Derb(\shd^\rop_Y),\quad\doim{f}(\shm)=\roim{f}(\shm\ltens[\shd_X]\shd_{X\to Y}).
\eneqn
We denote by $\mdcoh[\shd_X]$ the thick abelian subcategory of $\md[\shd_X]$ consisting of coherent $\shd_X$-modules  and by $\Derb_\coh(\shd_X)$ the full triangulated subcategory consisting of objects with coherent cohomology.

For $\shm\in \Derb_\coh(\shd_X)$, we denote by $\chv(\shm)$ its characteristic variety, a closed $\C^\times$-conic complex analytic subvariety of $T^*X$.

\subsubsection*{The Cauchy-Kowaleski-Kashiwara theorem}

\begin{definition}
Let $f:X\rightarrow Y$ be a morphisms of manifolds and $\shn$ be a coherent $\shd_{Y}$-module. One says that $f$ is  non-characteristic for $\shn$ if $f$ is non-characteristic for $\chv(\shn)$.
\end{definition}

\begin{proposition}\label{Ka1} {\rm (\cite{Ka70})}
Let $\shn$ be a coherent $\shd_{Y}$-module. Suppose that $f:X\rightarrow Y$ is
non-characteristic for $\shn$. Then
\banum
\item
$\operatorname{H}^{k}(\dopb{f}\shn)=0$ for $k\neq 0$,
\item
$\operatorname{H}^{0}(\dopb{f}\shn)$ is a coherent $\shd_{X}$-module,
\item
$ \mathrm{char}(\dopb{f}\shn)= f_{d}f_{\pi}^{-1}\mathrm{char}{\shn}.$
\eanum
\end{proposition}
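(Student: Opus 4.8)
The plan is to follow Kashiwara's original strategy: reduce to the case of a closed immersion, replace the inverse image by a Koszul complex, and prove that non-characteristicity turns the local defining functions of $X$ into a regular sequence on $\shn$; parts (b) and (c) then come out by passing to a good filtration.

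First I would factor $f$ through its graph, writing $f=p\circ\gamma$ with $\gamma\colon X\to X\times Y$, $x\mapsto(x,f(x))$, a closed immersion and $p\colon X\times Y\to Y$ the projection, a submersion, so that $\dopb f\simeq\dopb\gamma\circ\dopb p$. For the submersion $p$ the transfer bimodule $\shd_{X\times Y\to Y}$ is flat over $\opb p\shd_Y$, hence $\dopb p$ is exact, carries $\mdcoh[\shd_Y]$ into $\mdcoh[\shd_{X\times Y}]$, satisfies $\chv(\dopb p\shn)=p_d\,p_\pi^{-1}\chv(\shn)$, and $p$ is non-characteristic for every coherent module; a short computation with conormal bundles then shows that $f$ is non-characteristic for $\shn$ if and only if $\gamma$ is non-characteristic for $\dopb p\shn$. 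We are thus reduced to a closed immersion $i\colon X\hookrightarrow Y$, and, the statement being local on $X$, we may assume $i$ of constant codimension $p$ and choose coordinates with $X=\{t_1=\dots=t_p=0\}$. Since $(t_1,\dots,t_p)$ is a regular sequence in $\opb i\sho_Y$ with quotient $\sho_X$, the bimodule $\shd_{X\to Y}$ is resolved, as a right $\opb i\shd_Y$-module, by the Koszul complex built on left multiplication by $t_1,\dots,t_p$ on $\opb i\shd_Y$; tensoring over $\opb i\shd_Y$ with $\opb i\shn$ yields a canonical isomorphism $\dopb i\shn\simeq K^\bullet(t_1,\dots,t_p;\opb i\shn)$, the Koszul complex of the commuting multiplications by the $t_j$ on $\opb i\shn$. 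Hence (a) will follow once we show that $(t_1,\dots,t_p)$ is a regular sequence on $\shn$ in a neighbourhood of $X$, and in that case $K^\bullet$ resolves $\operatorname{H}^0(\dopb i\shn)=\opb i\bigl(\shn/(t_1,\dots,t_p)\shn\bigr)$.

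This is the heart of the matter, and I would approach it by choosing locally a good filtration $F_\bullet\shn$ over the order filtration of $\shd_Y$ for which $(t_1,\dots,t_p)$ is a regular sequence on $\mathrm{gr}^F\shn$; the standard filtered-to-graded comparison then shows that $(t_1,\dots,t_p)$ is a regular sequence on $\shn$ and that $\mathrm{gr}\bigl(\shn/(t_1,\dots,t_p)\shn\bigr)=\mathrm{gr}^F\shn/(t_1,\dots,t_p)\mathrm{gr}^F\shn$. The existence of such a filtration is exactly where non-characteristicity enters, through two facts. First, no irreducible component of $\chv(\shn)$, the support of $\mathrm{gr}^F\shn$ in $T^*Y$, is contained in $\pi^{-1}(X)$: by the involutivity theorem each such component $V$ is conic and coisotropic, so that if $V$ were contained in $\pi^{-1}(X)$, the Hamiltonian vector fields of $t_1,\dots,t_p$ --- the infinitesimal translations along the conormal directions of $X$ --- would be tangent to $V$; being closed and conic, $V$ meets the zero-section of $T^*Y$, and translating there along those directions produces a nonzero covector of $\chv(\shn)$ lying in $T^*_XY$, contradicting that $i$ is non-characteristic. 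Second, non-characteristicity forces $i_d$ to be finite in restriction to $i_\pi^{-1}\chv(\shn)$, so that $\dim\bigl(i_\pi^{-1}\chv(\shn)\bigr)\le\dim T^*X$. Granting in addition, by a suitable choice of $F$, that $\mathrm{gr}^F\shn$ has no embedded prime contained in $\pi^{-1}(X)$, these facts yield that $(t_1,\dots,t_p)$ is a regular sequence on $\mathrm{gr}^F\shn$.

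With (a) in hand, (b) and (c) are formal. For a suitable good filtration the associated graded of $\operatorname{H}^0(\dopb i\shn)$ is the inverse image of $\mathrm{gr}^F\shn$ under the maps $i_d$ and $i_\pi$ --- which carries no higher derived term precisely because $(t_1,\dots,t_p)$ is a regular sequence --- hence a coherent $\mathrm{gr}\shd_X$-module supported on $i_d\,i_\pi^{-1}\chv(\shn)$; therefore $\operatorname{H}^0(\dopb i\shn)$ is a coherent $\shd_X$-module, and, $i_d$ being finite on $i_\pi^{-1}\chv(\shn)$, the set $i_d\,i_\pi^{-1}\chv(\shn)$ is a closed $\C^\times$-conic analytic subset of $T^*X$, equal to $\chv(\operatorname{H}^0(\dopb i\shn))$. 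The main obstacle is precisely the regularity statement of the third paragraph: it is the $\shd$-module form of the Cauchy--Kovalevskaya theorem and the only step using non-characteristicity rather than mere coherence; within it, the delicate point is to rule out embedded primes of $\mathrm{gr}^F\shn$ situated over $X$ by an appropriate choice of good filtration --- or else one dispenses with the graded module altogether and derives the regularity of $(t_1,\dots,t_p)$ on $\shn$ near $X$ directly from the holomorphic Cauchy--Kovalevskaya estimate.
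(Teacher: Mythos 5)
The paper does not prove this proposition: it is recalled verbatim from Kashiwara's thesis (\cite{Ka70}, see also \cite{Ka03}) as a classical input for the Ran-space statements, so there is no in-text argument to compare yours with. Judged on its own terms, your skeleton is the standard and correct one --- factor $f$ through its graph, dispose of the submersion using flatness of the transfer bimodule, identify $\dopb{i}\shn$ for a closed embedding with the Koszul complex of $(t_1,\dots,t_p)$ acting on $\opb{i}\shn$, and reduce everything to the assertion that $(t_1,\dots,t_p)$ is a regular sequence on $\shn$ in a neighbourhood of $X$. Up to that reduction the proposal is fine.

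The gap is exactly where you flag it, and it is not a removable formality. Regularity of the sequence on $\mathrm{gr}^F\shn$ is governed by \emph{all} associated primes of $\mathrm{gr}^F\shn$, whereas the non-characteristicity hypothesis, combined with the involutivity theorem, only controls the \emph{minimal} ones (those cutting out the components of $\chv(\shn)$). Your Hamiltonian-flow argument correctly rules out a component of $\chv(\shn)$ contained in $\pi^{-1}(X)$, but it says nothing about embedded primes, which may contain the $t_j$ and are not known to be involutive. The clause ``granting, by a suitable choice of $F$, that $\mathrm{gr}^F\shn$ has no embedded prime contained in $\pi^{-1}(X)$'' therefore assumes the crux of the theorem: no construction of such a good filtration is offered, and the existence of good filtrations whose graded module has no embedded primes is a delicate matter that cannot be extracted from support considerations. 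This is precisely why the classical proofs do not pass through $\mathrm{gr}^F$ at this step: they establish the injectivity of $t$ on $\shn$ and the $\shd_X$-coherence of $\shn/t\shn$ by a division argument, locally presenting $\shn$ as a quotient of modules of the form $\shd_Y/\shd_YP$ with $P$ non-characteristic and invoking the Sp\"ath--Weierstrass division theorem for differential operators --- which is where the Cauchy--Kovalevskaya estimate actually enters. Your closing remark points at this route but does not carry it out; as written, parts (a)--(c) all rest on the unproved regularity claim.
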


For $\shn_1$ and $\shn_2$ in $\Derb(\shd_Y)$ we have a natural
morphism
\eqn
&&\opb{f}\rhom[\shd_{Y}](\shn_1,\shn_2)\to\rhom[\shd_{X}](\dopb{f}\shn_1, \dopb{f}\shn_2).
\eneqn
Note that
$\dopb{f}\sho_X\simeq \sho_{Y}$.

The following theorem is known as the {\it Cauchy-Kovalesky-Kashiwara} theorem:

\begin{theorem}\label{th:CKK} {\rm (\cite{Ka70})} Let $f\cl X\rightarrow Y$ be a
  morphism of complex manifolds and $\shn$ be a coherent $\shd_{Y}$-module.
If $f$ is non-characteristic for $\shn$, then
\beq
f^{-1}\rhom[\shd_{Y}](\shn,\sho_{Y})\rightarrow \rhom[\shd_{X}](\dopb{f}\shn,\sho_{X})
\eeq
is an isomorphism.
\end{theorem}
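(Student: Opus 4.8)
The plan is to reduce the statement to a purely local and then purely microlocal question, following the strategy that is already implicit in Proposition \ref{Ka1}. First I would construct the natural morphism
\eq\label{eq:CKKmor}
\opb{f}\rhom[\shd_Y](\shn,\sho_Y)\to\rhom[\shd_X](\dopb{f}\shn,\dopb{f}\sho_X)\simeq\rhom[\shd_X](\dopb{f}\shn,\sho_X),
\eneq
using the functoriality morphism recalled just before the statement together with the canonical isomorphism $\dopb{f}\sho_X\simeq\sho_Y$ (read with the roles of $X$ and $Y$ as in the excerpt, i.e.\ $\dopb{f}\sho_Y\simeq\sho_X$). Since the assertion is local on $X$, and every coherent $\shd_Y$-module admits locally a finite free presentation, I would first check the statement for $\shn=\shd_Y$: in that case the left-hand side is $\opb{f}\sho_Y$, the right-hand side is $\rhom[\shd_X](\dopb{f}\shd_Y,\sho_X)=\rhom[\shd_X](\shd_{X\to Y},\sho_X)$, and one identifies this with $\opb{f}\sho_Y=\sho_X\tens[\opb{f}\sho_Y]\opb{f}\sho_Y$ via the transfer bimodule; this is the baby case of Cauchy--Kovalevskaya and it is unconditional (no non-characteristic hypothesis is needed because $\chv(\shd_Y)=T^*Y$ makes the statement for $\shd_Y$ actually just the statement $\dopb{f}\sho_Y\simeq\sho_X$).

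Next I would use a local finite free presentation $\shd_Y^{N_1}\to\shd_Y^{N_0}\to\shn\to0$ and extend it to a length-controlled resolution; here is where the non-characteristic hypothesis enters. By Proposition \ref{Ka1}(a), non-characteristicity guarantees $\dopb{f}$ is exact on $\shn$ (concentrated in degree $0$), so that applying $\dopb{f}$ to a free resolution $\shd_Y^{\bullet}\to\shn$ of $\shn$ yields a resolution $\shd_{X\to Y}^{\bullet}$ computing $\dopb{f}\shn$ with no higher cohomology interference. I would then compare the two complexes obtained by applying $\rhom[\shd_Y](-,\sho_Y)$ and then $\opb{f}$ on one side, versus applying $\dopb{f}$ and then $\rhom[\shd_X](-,\sho_X)$ on the other; on the free generators both reduce, by the $\shn=\shd_Y$ case above, to the same complex of $\opb{f}\sho_Y$-modules with the same differentials (the differentials being the images under $f$ of the matrices of differential operators defining the resolution). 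A spectral sequence / way-out-functor argument, or simply a degreewise comparison since everything is concentrated in a single degree after applying $\dopb{f}$, then upgrades the isomorphism from free modules to $\shn$.

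The main obstacle is the microlocal input: the exactness in Proposition \ref{Ka1}(a) is itself the nontrivial part, and more precisely one needs that $\sho_X$ is ``non-characteristic'' in the sense that the relevant $\Tor$'s between $\shd_{X\to Y}$, $\sho_X$ and the resolution of $\shn$ vanish off the diagonal. Concretely, the heart of the matter is the statement that, under the non-characteristic condition $\opb{f_\pi}\chv(\shn)\cap T^*_XY\subset X\times_YT^*_YY$, the morphism \eqref{eq:CKKmor} is an isomorphism at the level of stalks; this is proved by Kashiwara by a division/Weierstrass-type argument producing a formal, then convergent, solution, i.e.\ the classical Cauchy--Kovalevskaya theorem upgraded to systems. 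Since in this paper that classical statement is quoted as \cite{Ka70} and Proposition \ref{Ka1} is available, I would simply invoke it to close the gap: granting Proposition \ref{Ka1}, the remaining content is the homological bookkeeping sketched above, and the isomorphism \eqref{eq:CKKmor} follows by dévissage from the free case together with part (a) ensuring that $\dopb{f}$ commutes with the relevant truncations.
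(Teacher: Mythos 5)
The paper itself offers no proof of Theorem \ref{th:CKK}: it is quoted verbatim from Kashiwara's thesis \cite{Ka70}, so there is nothing in the text to compare your argument against. Judged on its own terms, your sketch has a genuine gap at its base case. You claim that for $\shn=\shd_Y$ the statement is ``unconditional'', i.e.\ that $\rhom[\shd_X](\shd_{X\to Y},\sho_X)\simeq\opb{f}\sho_Y$. This is false unless $f$ is a submersion. Already for the closed embedding $X=\{0\}\hookrightarrow Y=\C$ one has $\shd_{X\to Y}\simeq\C[\partial_t]$ as a $\shd_X=\C$-module, so $\rhom[\shd_X](\shd_{X\to Y},\sho_X)\simeq\prod_{n\ge 0}\C\simeq\C[[t]]$, the formal power series, while $\opb{f}\sho_Y\simeq\C\{t\}$, the convergent ones; the natural map is the inclusion of convergent into formal Taylor expansions, and the gap between the two is precisely the analytic content of Cauchy--Kovalevskaya. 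Consistently with this, $f$ is non-characteristic for $\shd_Y$ (whose characteristic variety is all of $T^*Y$) only when $f$ is a submersion, so the free modules occurring in a resolution of $\shn$ never satisfy the hypothesis of the theorem for an embedding. D\'evissage from the free case therefore cannot work: the two complexes you propose to compare termwise have non-isomorphic terms, and the theorem asserts only that their totalizations are quasi-isomorphic, thanks to the non-characteristicity of the \emph{differentials} (the matrices of operators in the presentation), not of the free terms.

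The missing idea is exactly the one your d\'evissage bypasses. After factoring $f$ through its graph, the embedding case is reduced, by induction on codimension and the division/Weierstrass preparation theorem for differential operators, to a single operator $P$ non-characteristic for a hypersurface $t=0$, for which $\dopb{f}(\shd_Y/\shd_YP)\simeq\shd_X^{m}$ with $m$ the order of $P$ in $\partial_t$; the isomorphism of solution complexes is then the classical statement that the Cauchy problem $Pu=v$ with prescribed traces $\partial_t^{k}u|_{t=0}$, $0\le k<m$, is uniquely solvable in convergent (not merely formal) series. Proposition \ref{Ka1} does not supply this: it records the coherence, the concentration in degree $0$ and the characteristic variety of $\dopb{f}\shn$, but says nothing about solution complexes, so invoking it ``to close the gap'' leaves the analytic heart of the theorem unproved. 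Since the paper treats Theorem \ref{th:CKK} as a black box from \cite{Ka70}, the honest options are either to cite it as such or to carry out the graph factorization and the single-operator reduction in full.
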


\section{Grothendieck stacks}\label{prestacks}
References for this section are made to \cite{SGA4} (see also~\cite{KS06} for an exposition).
The results of this section are well-known to the specialists.
We will work in a given universe $\shu$, a category means a
$\shu$-category
and a set means a small set. Here
$\Cat$ denotes the big $2$-category of all $\shu$-categories.
\subsubsection*{Prestack}
Let $\sts$ be a $2$-projective system of categories indexed by a small
category $\cI$, that is, a functor $\sts:\cI^\rop\to\Cat$. For short, we consider $\cI$ as a presite and call
$\sts$ a prestack on $\cI$. Hence
\banum
 \item for each $i\in\cI$, $\sts(i)$ is a
category;
\item for each morphism $s\cl i_1\to i_2$ in $\cI$, $\sts(s)$ is a functor
$\sts(i_2)\to\sts(i_1)$, called the restriction functor (for
convenience, we shall write in the sequel $\restr_s$ instead of $\sts(s)$);

\item for $s\cl i_1\to i_2$ and $t\cl i_2\to i_3$, we have an isomorphism of
functor $c_{s,t}\cl \rho_s\circ \rho_t \isoto \rho_{t\circ s}$, making the
diagram below commutative for each $u\cl i_3\to i_4$:
\eqn
&&\xymatrix{
\rho_s\circ\rho_t\circ\rho_u\ar[r]^-{c_{t,u}}\ar[d]_-{c_{s,t}}&\rho_s\circ\rho_{u\circ t}\ar[d]^{c_{s,u\circ t}}\\
\rho_{t\circ s}\circ\rho_u\ar[r]^{c_{t\circ s,u}}&\rho_{u\circ t\circ s}.
}\eneqn
\item finally $\rho_{\operatorname{id}_{i}}=\operatorname{id}_{\sts(i)}$ and $c_{\operatorname{id}_{i},\operatorname{id}_{i}}=\operatorname{id}_{\operatorname{id}_{\sts{(i)}}}$ for any $i\in \cI$.
\eanum

\subsubsection*{Functor of prestacks}
Let $\sts_{\nu}$ ($\nu=1,2$) be prestacks on $\cI$ with the restrictions $\restr^{\nu}_s$ and the composition isomorphisms $c^{\nu}_{s,t}$.\\
Recall that a functor of prestacks $\Phi\cl\sts_{1}\to \sts_{2}$ on $\cI$ is the data of:
\banum
\item
for any $i\in \cI$, a functor $\Phi(i)\cl \sts_{1}(i)\to \sts_{2}(i)$;
\item
for any morphism $s\cl i_1\to i_2$, an isomorphism $\Phi_{s}$ of functors from $\sts_{1}(i_2)$ to $\sts_{2}(i_1)$
$$\Phi_{s}\cl \Phi(i_1)\circ \restr^{1}_{s}\isoto \restr^{2}_{s}\circ \Phi(i_2);$$
\eanum
these data satisfying: for any sequence of morphisms  $i_1\to[s]i_2\to[t]i_3$ the following diagram commutes
\eq\label{morp-prestack}
\vcenter{
\xymatrix{
\Phi(i_1)\circ \restr^1_s \circ \restr^1_t\ar[rr]^-{\Phi_{s}}
                                  \ar[d]^-{c^{1}_{s,t}}
      &&\restr^{2}_s\circ \Phi(i_2)\circ\restr^{1}_{t}\ar[rr]^-{\Phi_{t}}&& \restr^{2}_s\circ \restr^{2}_{t}\circ \Phi(i_3)\ar[d]^-{c^{2}_{s,t}}&&\\
  \Phi(i_1)\circ \restr^{1}_{t\circ s}\ar[rrrr]^-{\Phi_{t\circ s}}
     && &&\restr^{2}_{t\circ s}\circ\Phi(i_3).
    }  }
\eeq
\subsubsection*{Morphism of functors of prestacks}
We will need the following definition
\begin{definition}
Let $\Phi_{\nu}\cl\sts_{1}\to \sts_{2}$ ($\nu=1,2$) be two functors of prestacks on $\cI$.
A morphism of functors of prestacks $\theta\cl\Phi_{1}\to\Phi_{2}$ is the data for any $i\in \cI$ of a morphism of functors $\theta(i)\cl\Phi_{1}(i)\to \Phi_{2}(i)$
such that for any morphism $s\cl i_1 \to i_2$ in $\cI$, the following diagram commutes
\eq
\xymatrix{
\Phi_{1}(i_1)\circ \restr^1_s \ar[d]^-{\Phi^1_s} \ar[rr]^-{\theta(i_1)} &&\Phi_{2}(i_1)\circ \restr^1_s  \ar[d]^-{\Phi^2_s}&&\\
  \restr_{s}^2\circ \Phi_{1}(i_2) \ar[rr]^-{\theta(i_2)}
     && \restr^{2}_{s}\circ\Phi_{2}(i_2).
    }
\eneq

\end{definition}
\subsubsection*{$2$-projective limits}
For a  prestack $\sts$ on $\cI$, one defines the category
$\sts(\cI)$ as  the $2$-projective limit of the functor
$\sts\cl \cI^\rop\to \Cat$:
\eqn
&&\sts(\cI)=\tprolim[i,\,s]\sts(i).
\eneqn
More explicitly  $\sts(\cI)$ is given as follows.
\begin{definition}\label{def:2tprolim}
\banum
\item
An object $F$ of $\sts(\cI)$
 is a family $\{(F_i, \varphi_s)\}_{i,s}$
($i\in\cI$, $s\in\Fl(\cI)$) where
\bnum
\item
for any $i\in\cI$, $F_i$ is an object of $\sts(i)$,
\item
for any morphism $s\colon i_1\to i_2$ in $\cI$,
$\varphi_s \cl\restr_s F_{i_2}\isoto F_{i_1}$
is an isomorphism such that
\begin{itemize}
\item
for all $i \in \cI$, $\varphi_{\id_i} = \id_{F_i}$,
\item
for any sequence $i_1\to[s]i_2\to[t]i_3$ of morphisms in $\cI$, the following diagram commutes
\eq\label{cond:cos_S(A)} && \vcenter{
\xymatrix{
\restr_s \restr_tF_{i_3}\ar[rr]_-\sim^-{\restr_s(\varphi_t)}
                                  \ar[d]^-\wr_-{c_{s,t}}
      &&\restr_sF_{i_2}\ar[d]^-\wr_-{\varphi_{s}}\\
  \restr_{t\circ s}F_{i_3}\ar[rr]_-\sim^-{\varphi_{t\circ s}}
      &&F_{i_1}.
    } }  \eneq
\end{itemize}
\enum
\item
A morphism
$f\cl\{(F_i,\varphi_s)\}_{i,s}\to\{(F'_i,\varphi'_s)\}_{i,s}$
in $\sts(\cI)$ is a family of morphisms
$\{f_i\cl F_i\to F'_i\}_{i\in \cI}$ such that
for any $s\cl i_1\to i_2$, the diagram below commutes:
\eq\label{diag:morsts} \vcenter{ \xymatrix{
\restr_s(F_{i_2})\ar[d]^\wr_-{\varphi_s}\ar[rr]^-{\restr_s(f_{i_2})}
             &&\restr_{s}(F'_{i_2}) \ar[d]_\wr^-{\varphi'_s}  \\
F_{i_1}\ar[rr]^-{f_{i_1}}
&&F'_{i_1} .  }
}
\eneq
\eanum
\end{definition}

\begin{remark}\label{rem:morinsts+}
Denote by $\Fl(\cI)$ the category whose objects are the morphisms
of $\cI$, a morphism $(s\cl i\to j)\to (s'\cl i'\to j')$ being
visualized by the commutative diagram
\eqn
\xymatrix@R=4ex{
i\ar[r]^-s\ar[d]_-t&j\\
i'\ar[r]^-{s'}&j'.\ar[u]_-{t'}
}\eneqn
For two objects $F=\{(F_i,\varphi_s)\}_{i,s}$
and $F'=\{(F'_i, \varphi'_s)\}_{i,s}$ in $\sts(\cI)$,  consider  the
functor $\Psi\cl \Fl(\cI)^\rop\to\Set$ defined as follows.
For $(s\cl i\to j)\in\Fl(\cI)$, set
\eqn
&&\Psi(s)= \Hom[\sts(i)](F_i,\restr_sF'_j).
\eneqn
For $(s'\cl i'\to j')\in\Fl(\cI)$ and for a morphism
$(t,t')\cl s\to s'$ in $\Fl(\cI)$,
define $\Psi(s')\to\Psi (s)$ as the composition
\eqn
\Hom[\sts(i')](F_{i'},\restr_{s'}F'_{j'})
&\to &\Hom[\sts(i')](\restr_tF_{i'},\restr_t\restr_{s'}F'_{j'})\\
&\to &\Hom[\sts(i)](F_i,\restr_sF'_j)
\eneqn
where the last map is associated with the morphisms
$F_i\to \restr_tF_{i'}$ and
$\restr_t\restr_{s'}F'_{j'}\to\restr_t\restr_{s'}\restr_{t'}F'_j
\simeq \restr_sF'_j)$.

Then
\eqn
&&\Hom[\sts(\cI)](F,F') \simeq\prolim[s\in\Fl(\cI)^\rop]\Psi(s).
\eneqn
\end{remark}

\begin{proposition}\label{pro:prestack1}
Let $\sts$ be a prestack on $\cI$.
Assume that,
for each object $i\in\cI$, the category $\sts(i)$
 admits inductive \lp resp.\ projective\rp\, limits indexed by a small
 category $A$ and that,
for each morphism $s\cl i\to j$ in $\cI$, the restriction functor
$\restr_s$ commutes with such limits.
Then the category $\sts(\cI)$
 admits inductive \lp resp.\ projective\rp\, limits indexed by
 $A$ and the restriction functors $\restr_i\cl \sts(\cI)\to\sts(i)$
 commute with such limits.
\end{proposition}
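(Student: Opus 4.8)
The plan is to construct the limit componentwise in the categories $\sts(i)$ and then upgrade the resulting family to an object of $\sts(\cI)$, using that each restriction functor commutes with $A$-limits. I will spell out the projective case; the inductive one is entirely dual. So let $\alpha\cl A\to\sts(\cI)$ be a functor, with $\alpha(a)=\{(F^a_i,\varphi^a_s)\}_{i,s}$. First, for each $i\in\cI$ the rule $a\mapsto F^a_i$ is a functor $A\to\sts(i)$, which by hypothesis has a projective limit; set $G_i\eqdot\varprojlim_a F^a_i$, with projections $p^a_i\cl G_i\to F^a_i$.

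Next I would produce the transition isomorphisms. For a morphism $s\cl i_1\to i_2$ in $\cI$, since $\restr_s$ commutes with $A$-limits the object $\restr_s G_{i_2}$ equipped with the maps $\restr_s(p^a_{i_2})$ is a projective limit of $a\mapsto\restr_s F^a_{i_2}$. On the other hand, the very definition of a morphism in $\sts(\cI)$ (diagram \eqref{diag:morsts}) says that the isomorphisms $\varphi^a_s\cl\restr_s F^a_{i_2}\isoto F^a_{i_1}$ are natural in $a$, hence induce an isomorphism between those two projective limits; composing it with the canonical comparison $\restr_s G_{i_2}\isoto\varprojlim_a\restr_s F^a_{i_2}$ gives an isomorphism $\psi_s\cl\restr_s G_{i_2}\isoto G_{i_1}$, characterised by $p^a_{i_1}\circ\psi_s=\varphi^a_s\circ\restr_s(p^a_{i_2})$ for all $a$. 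I then have to check that $G\eqdot\{(G_i,\psi_s)\}_{i,s}$ actually lies in $\sts(\cI)$, i.e. $\psi_{\id_i}=\id_{G_i}$ and the cocycle condition \eqref{cond:cos_S(A)}. The strategy is to test every identity against the projections $p^a$: going around the square \eqref{cond:cos_S(A)} for $G$ and composing with $p^a_{i_1}$ reproduces, via the characterisation above and the naturality of $c_{s,t}\cl\restr_s\circ\restr_t\isoto\restr_{t\circ s}$ applied to $p^a_{i_3}$, the two sides of the corresponding square for $\alpha(a)$ precomposed with $\restr_s\restr_t(p^a_{i_3})$. Since those squares commute for every $a$ and the $p^a_{i_1}$ are jointly monomorphic, the square for $G$ commutes; the same bookkeeping shows the componentwise projections $\{p^a_i\}_i$ are morphisms $G\to\alpha(a)$ in $\sts(\cI)$ and are compatible with the arrows of $A$.

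Finally I would verify the universal property together with the assertion about the $\restr_i$. The latter is immediate once $G$ is known to be the limit, since by construction $\restr_i G=G_i=\varprojlim_a\alpha(a)_i$. For the universal property I would invoke Remark \ref{rem:morinsts+}: for any $H=\{(H_i,\chi_s)\}_{i,s}\in\sts(\cI)$, using that $\restr_s G_j\simeq\varprojlim_a\restr_s F^a_j$, that $\Hom[\sts(i)](H_i,\varprojlim_a\restr_s F^a_j)\simeq\varprojlim_a\Hom[\sts(i)](H_i,\restr_s F^a_j)$ in each $\sts(i)$, and that projective limits commute with projective limits, one obtains
\[
\Hom[\sts(\cI)](H,G)\;\simeq\;\varprojlim_{s\in\Fl(\cI)^\rop}\varprojlim_a\Hom[\sts(i)](H_i,\restr_s F^a_j)\;\simeq\;\varprojlim_a\Hom[\sts(\cI)](H,\alpha(a)),
\]
functorially in $H$ and compatibly with the projections $p^a$; this is precisely the universal property of $\varprojlim_a\alpha(a)$. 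In the inductive case one runs the dual computation, now inserting $\Hom[\sts(i)](\varinjlim_a F^a_i,-)\simeq\varprojlim_a\Hom[\sts(i)](F^a_i,-)$ into Remark \ref{rem:morinsts+}.

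I expect the genuinely delicate point to be the third step — checking the cocycle condition \eqref{cond:cos_S(A)} for the constructed $\psi_s$ — because it requires carefully threading the coherence isomorphisms $c_{s,t}$ of the prestack and the naturality of the $\varphi^a_s$ through the limit comparisons. Everything else is formal: each equation in $\sts(\cI)$ is reduced to the corresponding equation in the categories $\sts(i)$ by testing against the universal cone (resp.\ cocone).
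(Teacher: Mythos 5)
Your proof is correct. Note that the paper itself gives no proof of this proposition: it is stated as one of the results of Section \ref{prestacks} that are ``well-known to the specialists,'' with a reference to \cite{SGA4} and \cite{KS06}. Your argument — forming the limit componentwise in each $\sts(i)$, transporting the structure isomorphisms through the canonical comparison $\restr_s\bigl(\lim_a F^a_{i_2}\bigr)\simeq\lim_a\restr_s F^a_{i_2}$, and verifying the cocycle condition and the universal property by testing against the jointly monomorphic (resp.\ epimorphic) limit projections — is exactly the standard proof found in those references, and all the steps you identify as requiring care (in particular threading $c_{s,t}$ and the naturality of the $\varphi^a_s$ through the comparison isomorphisms) go through as you describe.
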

\subsubsection*{Some Lemmas}
\begin{lemma}\label{lemma1}Every functor of prestacks $\Psi\cl\sts_{1}\to \sts_{2}$ on $\cI$ induces a functor $\Psi_{\infty}\cl\sts_{1}{(\cI)}\to \sts_{2}{(\cI)}$ on the corresponding two-limit categories.
\end{lemma}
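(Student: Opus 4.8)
The plan is to build $\Psi_\infty$ explicitly, level by level, using the description of the two-limit category given in Definition~\ref{def:2tprolim}, and then to check that the resulting data satisfy the two coherence conditions. Throughout, write $\restr^\nu_s$ and $c^\nu_{s,t}$ for the restriction functors and composition isomorphisms of $\sts_\nu$ ($\nu=1,2$), and write $\Psi(i)\cl\sts_1(i)\to\sts_2(i)$ and $\Psi_s\cl\Psi(i_1)\circ\restr^1_s\isoto\restr^2_s\circ\Psi(i_2)$ for the data of the functor of prestacks $\Psi$. On objects I would set, for $F=\{(F_i,\varphi_s)\}_{i,s}\in\sts_1(\cI)$,
\eqn
&&\Psi_\infty(F)\eqdot\{(G_i,\psi_s)\}_{i,s},\qquad G_i\eqdot\Psi(i)(F_i),
\eneqn
the isomorphism $\psi_s\cl\restr^2_sG_{i_2}\isoto G_{i_1}$ being the composite
\eqn
&&\restr^2_s\Psi(i_2)(F_{i_2})\xrightarrow{(\Psi_s)^{-1}}\Psi(i_1)\bigl(\restr^1_sF_{i_2}\bigr)\xrightarrow{\Psi(i_1)(\varphi_s)}\Psi(i_1)(F_{i_1}),
\eneqn
which makes sense since $\Psi_s$ is invertible; on a morphism $f=\{f_i\}_{i}\cl F\to F'$ of $\sts_1(\cI)$ I would set $\Psi_\infty(f)\eqdot\{\Psi(i)(f_i)\}_{i}$.

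Next I would check that these assignments land in $\sts_2(\cI)$ and are functorial. For $\Psi_\infty(F)$ there are two things to verify. The normalization $\psi_{\id_i}=\id_{G_i}$ reduces, via $\varphi_{\id_i}=\id$, to the identity $\Psi_{\id_i}=\id$; the latter follows by specializing the compatibility diagram \eqref{morp-prestack} to $s=t=\id_i$ and using $\restr^\nu_{\id_i}=\id$ and $c^\nu_{\id_i,\id_i}=\id$, which yields $\Psi_{\id_i}\circ\Psi_{\id_i}=\Psi_{\id_i}$, hence $\Psi_{\id_i}=\id$ since it is invertible. For $\Psi_\infty(f)$, the square \eqref{diag:morsts} is obtained by pasting the naturality square of the isomorphism of functors $\Psi_s$ evaluated at $f_{i_2}$ together with the image under $\Psi(i_1)$ of the square \eqref{diag:morsts} for $f$. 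Preservation of identities and of composition by $\Psi_\infty$ is then immediate, since each $\Psi(i)$ is a functor.

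The one genuinely non-formal point, and the step I expect to be the main obstacle, is the cocycle condition \eqref{cond:cos_S(A)} for the pair $(G_i,\psi_s)$. Fixing composable arrows $i_1\to[s]i_2\to[t]i_3$, one has to prove $\psi_s\circ\restr^2_s(\psi_t)=\psi_{t\circ s}\circ c^2_{s,t}$ as isomorphisms $\restr^2_s\restr^2_tG_{i_3}\isoto G_{i_1}$. Substituting the definition of the $\psi$'s and cancelling the common isomorphism $\Psi(i_1)(\varphi_{t\circ s})$, this identity becomes the assemblage, in the correct order, of three commuting pieces: (i) the naturality square of $(\Psi_s)^{-1}$ evaluated at $\varphi_t$, which moves $\varphi_t$ across $\Psi_s$; (ii) the image under $\Psi(i_1)$ of the cocycle square \eqref{cond:cos_S(A)} for $F$ itself, turning $\varphi_s\circ\restr^1_s(\varphi_t)$ into $\varphi_{t\circ s}\circ c^1_{s,t}$; and (iii) the compatibility diagram \eqref{morp-prestack} of $\Psi$, evaluated at $F_{i_3}$ and rewritten for the inverse isomorphisms, which ties together $\Psi_s$, $\Psi_t$, $\Psi_{t\circ s}$, $c^1_{s,t}$ and $c^2_{s,t}$. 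The only real care here is the bookkeeping of inverses and of the whiskerings of the natural transformations involved; once the three pieces are glued, the cocycle identity drops out and $\Psi_\infty\cl\sts_1(\cI)\to\sts_2(\cI)$ is a well-defined functor.
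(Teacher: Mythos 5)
Your construction of $\Psi_\infty$ (objects sent to $\{\Psi(i)F_i\}$ with transition maps $\Psi(i_1)(\varphi_s)\circ(\Psi_s)^{-1}$, morphisms to $\{\Psi(i)f_i\}$) and your verification strategy — applying $\Psi(i_1)$ to the cocycle and morphism diagrams of $F$ and combining with the compatibility diagram \eqref{morp-prestack} and the naturality of $\Psi_s$ — are exactly the paper's proof, only spelled out in slightly more detail (e.g.\ the normalization $\Psi_{\id_i}=\id$, which the paper leaves implicit). The proposal is correct and takes essentially the same approach.
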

\begin{proof}To each object $\{(F_i, \varphi_s)\}_{i,s} \in \sts_{1}(\cI)$, $\Psi$ associates $\{(\Psi{(i)}F_i,\tilde{ \varphi_s})\}_{i,s} \in \sts_{2}(\cI)$,
with $\tilde{\varphi_s}$ defined as the composition $\restr^{2}_{s}\Psi{(j)}F_j\isoto \Psi{(i)}\restr^{1}_{s}F_j\isoto \Psi{(i)}F_i$ where the first isomorphism is the inverse of $\Psi_{s}$ and the second is $\Psi{(i)}(\varphi_{s})$.
Notice that for any sequence $i_1\to[s]i_2\to[t]i_3$ of morphisms in $\cI$, the following diagram commutes
\eq
\xymatrix{
\restr^{2}_s \restr^2_t\Psi{(i_{3})} F_{i_3}\ar[rr]_-\sim^-{\restr^{2}_s(\tilde{\varphi_t})}
                                  \ar[d]^-\wr_-{c^2_{s,t}}
      &&\restr^{2}_s\Psi{(i_{2})}F_{i_2}\ar[d]^-\wr_-{\tilde{\varphi_{s}}}\\
  \restr^2_{t\circ s}\Psi{(i_{3})} F_{i_3}\ar[rr]_-\sim^-{\tilde{\varphi}_{t\circ  s}}
      &&\Psi{(i_{1})}F_{i_1};
    }   \eneq
in fact it is enough to apply $\Psi(i_{1})$ to the corresponding diagram in def $(\ref{def:2tprolim})$ and then use property $(\ref{morp-prestack})$ of the morphism $\Psi$.\\
Finally $\Psi_{\infty}$ associates to each morphism $f\cl\{(F_i,\varphi_s)\}_{i,s}\to\{(F'_i,\varphi'_s)\}_{i,s}$, $\Psi_{\infty}f=\{\Psi(i)f_{i}\} \cl\sts_{1}(\cI)\to\sts_{2}(\cI)$ with
\eq \vcenter{ \xymatrix{
\restr_s(\Psi(i_{2})F_{i_2})\ar[d]^\wr_-{\tilde{\varphi_s}}\ar[rr]^-{\restr_s\Psi(i_{2})(f_{i_2})}
             &&\restr_{s}(\Psi(i_{2})F'_{i_2}) \ar[d]_\wr^-{\tilde{\varphi'_s}}  \\
\Psi(i_{1})F_{i_1}\ar[rr]^-{\Psi(i_{1})f_{i_1}}
&&\Psi(i_{1})F'_{i_1}  }
}
\eneq
(it is enough to apply the functor $\Psi(i_{1})$ to the diagram (\ref{diag:morsts}))

\end{proof}
The next lemma is obvious
\begin{lemma}\label{lemma2}Let $\Psi_1\cl\sts_{1}\to \sts_{2}$ and $\Psi_2\cl\sts_{2}\to \sts_{3}$ be two morphisms of prestacks on $\cI$. There exists an isomorphism of functors $\Psi_{1\;\infty}\circ\Phi_{2\;\infty}\isoto(\Psi_1\circ \Phi_2)_{\infty}\cl\sts_{1}{(\cI)}\to \sts_{3}{(\cI)}$.
\end{lemma}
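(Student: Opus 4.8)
The statement asserts that forming $\Psi_\infty$ out of a functor of prestacks is compatible with composition. The plan is to show that the two functors $\sts_1(\cI)\to\sts_3(\cI)$ involved are in fact \emph{equal}, so that the asserted isomorphism of functors is the identity and its naturality is automatic; the whole content is then a comparison of gluing data, carried out object by object.

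First I would record the composition of functors of prestacks. For $\Psi_1\cl\sts_1\to\sts_2$ and $\Psi_2\cl\sts_2\to\sts_3$ with structure isomorphisms $(\Psi_1)_s$ and $(\Psi_2)_s$, the composite $\Psi_2\circ\Psi_1\cl\sts_1\to\sts_3$ has components $(\Psi_2\circ\Psi_1)(i)\eqdot\Psi_2(i)\circ\Psi_1(i)$ and structure isomorphism $(\Psi_2\circ\Psi_1)_s$ obtained by pasting the whiskered $2$-cells $\Psi_2(i_1)\bigl((\Psi_1)_s\bigr)$ and $(\Psi_2)_s\,\Psi_1(i_2)$, i.e. the composite
\eqn
&&\Psi_2(i_1)\Psi_1(i_1)\restr^1_s\isoto\Psi_2(i_1)\restr^2_s\Psi_1(i_2)\isoto\restr^3_s\Psi_2(i_2)\Psi_1(i_2).
\eneqn
The coherence condition (\ref{morp-prestack}) for $\Psi_2\circ\Psi_1$ follows from the ones for $\Psi_1$ and $\Psi_2$ together with the interchange law; this diagram chase is the one genuinely mechanical point, and I would relegate it to a routine remark.

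Next I would evaluate both composites on an object $F=\{(F_i,\varphi_s)\}_{i,s}$ of $\sts_1(\cI)$. On underlying families they agree, each being $\{\Psi_2(i)\Psi_1(i)F_i\}_i$. For the transition isomorphisms, Lemma \ref{lemma1} gives $\Psi_{1\,\infty}F$ the data $\tilde\varphi_s=\Psi_1(i_1)(\varphi_s)\circ\bigl((\Psi_1)_s\bigr)^{-1}$, whence $\Psi_{2\,\infty}(\Psi_{1\,\infty}F)$ carries $\Psi_2(i_1)(\tilde\varphi_s)\circ\bigl((\Psi_2)_s\bigr)^{-1}$. Expanding the first factor by functoriality of $\Psi_2(i_1)$ and regrouping, this equals
\eqn
&&\bigl(\Psi_2(i_1)\Psi_1(i_1)\bigr)(\varphi_s)\circ\Bigl(\Psi_2(i_1)\bigl((\Psi_1)_s\bigr)^{-1}\circ\bigl((\Psi_2)_s\bigr)^{-1}\Bigr).
\eneqn
The bracketed factor is precisely $\bigl((\Psi_2\circ\Psi_1)_s\bigr)^{-1}$ by the definition above, and the leading factor is $(\Psi_2\circ\Psi_1)(i_1)(\varphi_s)$, so the whole expression is exactly the transition isomorphism that Lemma \ref{lemma1} attaches to $(\Psi_2\circ\Psi_1)_\infty F$. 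Hence $\Psi_{2\,\infty}\circ\Psi_{1\,\infty}$ and $(\Psi_2\circ\Psi_1)_\infty$ agree on objects.

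Finally, on a morphism $f=\{f_i\}$ of $\sts_1(\cI)$ both functors return $\{\Psi_2(i)\Psi_1(i)(f_i)\}_i$, directly from the description of $\Psi_\infty$ on morphisms in Lemma \ref{lemma1}, and the compatibility square (\ref{diag:morsts}) for the image holds because the transition data already coincide. Therefore the two functors are equal and the identity serves as the desired isomorphism $\Psi_{2\,\infty}\circ\Psi_{1\,\infty}\isoto(\Psi_2\circ\Psi_1)_\infty$; naturality in $F$ is trivial. I do not expect any real obstacle here: the only step demanding care is pinning down the pasting that defines $(\Psi_2\circ\Psi_1)_s$ and checking that (\ref{morp-prestack}) is inherited by the composite, which is purely formal.
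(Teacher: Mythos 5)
Your proof is correct, and it supplies exactly the detail the paper omits: the paper simply declares this lemma obvious and gives no argument, the intended content being precisely the unpacking of Lemma \ref{lemma1} that you carry out (the transition isomorphism of the composite is the pasting of the two structure isomorphisms, so the two functors agree on the nose). You also sensibly read through the typo in the statement (the mixture of $\Psi$ and $\Phi$ and the order of composition) and prove the only reasonable interpretation, namely that $\Psi_{2\,\infty}\circ\Psi_{1\,\infty}$ coincides with $(\Psi_{2}\circ\Psi_{1})_{\infty}$.
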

Define
\eq \pi\cl\sts(\cI)\to \prod_{i\in\cI}\sts(i) \eneq
as the functor that associates to each $\{(F_i, \varphi_s)\}_{i,s}$, the collection $\{F_i\}_{i}$ and acts on each morphism $\{f_{i}\}_{i\in I}$ forgetting the property $(\ref{diag:morsts})$.
\begin{lemma}\label{lemma3}
The functor $\pi$ is conservative.
\end{lemma}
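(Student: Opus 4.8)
The plan is to prove directly that $\pi$ reflects isomorphisms, which is the meaning of ``conservative''. So I would start with a morphism $f\cl\{(F_i,\varphi_s)\}_{i,s}\to\{(F'_i,\varphi'_s)\}_{i,s}$ in $\sts(\cI)$, given by a family $\{f_i\cl F_i\to F'_i\}_{i\in\cI}$ satisfying the squares $(\ref{diag:morsts})$, and assume that $\pi(f)=\{f_i\}_i$ is an isomorphism in $\prod_{i\in\cI}\sts(i)$. By the definition of a product category this means precisely that each $f_i$ is an isomorphism in $\sts(i)$, so I may set $g_i\eqdot f_i^{-1}\cl F'_i\to F_i$. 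The task is then to show that the family $\{g_i\}_{i\in\cI}$ defines a morphism $g$ in $\sts(\cI)$; granting that, the componentwise identities $g_i\circ f_i=\id_{F_i}$ and $f_i\circ g_i=\id_{F'_i}$ exhibit $g$ as a two-sided inverse of $f$, so $f$ is an isomorphism and we are done.

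Hence the whole content is the verification that $\{g_i\}$ is compatible with the transition isomorphisms, i.e.\ that it satisfies the square $(\ref{diag:morsts})$ for the pair of objects $\{(F'_i,\varphi'_s)\}$ and $\{(F_i,\varphi_s)\}$. This is a short diagram chase, and it is the only real step. Fix $s\cl i_1\to i_2$ in $\cI$; the square $(\ref{diag:morsts})$ for $f$ reads $\varphi'_s\circ\restr_s(f_{i_2})=f_{i_1}\circ\varphi_s$. Every morphism appearing here is an isomorphism: $\varphi_s$ and $\varphi'_s$ by the definition of objects of $\sts(\cI)$, $f_{i_1}$ and $f_{i_2}$ by hypothesis, and therefore $\restr_s(f_{i_2})$ as well, since $\restr_s$ is a functor and hence carries $f_{i_2}^{-1}$ to $\restr_s(f_{i_2})^{-1}$. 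Inverting both sides of the identity and substituting $\restr_s(f_{i_2})^{-1}=\restr_s(g_{i_2})$ gives $\varphi_s\circ\restr_s(g_{i_2})=g_{i_1}\circ\varphi'_s$, which is exactly the commutativity of $(\ref{diag:morsts})$ for $\{g_i\}$. Since $s$ was arbitrary, $\{g_i\}$ is a morphism in $\sts(\cI)$.

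I do not expect a genuine obstacle here. It is worth noting what is \emph{not} needed: the higher coherence diagrams $(\ref{cond:cos_S(A)})$ never enter the argument, since they are part of the data of the objects $F$ and $F'$ rather than of the morphisms, and no property of $\cI$ or of limits is used. The single point that makes the argument work --- and the reason the statement holds already for the strict $2$-limit $\sts(\cI)$ --- is that each restriction functor $\restr_s$ preserves the relation of being inverse to a given morphism, so the family of componentwise inverses is automatically compatible with the gluing data.
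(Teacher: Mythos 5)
Your proof is correct and follows essentially the same route as the paper: take the componentwise inverses $g_i$ and verify the compatibility square $(\ref{diag:morsts})$ by a short diagram chase (you invert the identity $\varphi'_s\circ\restr_s(f_{i_2})=f_{i_1}\circ\varphi_s$ directly, while the paper cancels $f_{i_1}$ from a composed equality --- the same manipulation). Nothing further is needed.
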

\proof{
Consider $\{f_{i}\}_{i\in I}$ a morphism in $\sts(\cI)$ between $\{F_i,\varphi_s\}_{i,s}$ and $\{F'_i,\varphi'_s\}_{i,s}$.
Assume $\{f_{i}\}_{i\in I}$ is an isomorphism in $\prod_{i\in\cI}\sts(i)$.  For each $f_{i}$ denote by $g_i$ the corresponding inverse morphism. Consider the diagram
\eq\vcenter{\xymatrix{
\restr_s(F'_{i_2})\ar[d]^\wr_-{\varphi'_s}\ar[rr]^-{\restr_s(g_{i_2})}
             &&\restr_{s}(F_{i_2}) \ar[d]_\wr^-{\varphi_s}\ar[rr]^-{\restr_s(f_{i_2})} &&\ar[d]^\wr_-{\varphi'_s} \restr_s(F'_{i_2})\\
F'_{i_1}\ar[rr]^-{g_{i_1}}
&&F_{i_1}\ar[rr]^-{f_{i_1}}&&F'_{i_{1}};}
}\eneq
the right square is commutative by hypothesis and the large rectangle is trivially commutative. Then the equality $f_{i_{1}}\circ \varphi_{s}\circ \restr_s(g_{i_2})=f_{i_{1}}\circ g_{i_{1}}\circ\varphi'_{s}$ implies $\varphi_{s}\circ \restr_s(g_{i_2})= g_{i_{1}}\circ\varphi'_{s}$ since each $f_{i_{1}}$ is an isomorphism.
}
\begin{lemma}\label{lemma4}Let $\Psi_{\nu}\cl\sts_{1}\to \sts_{2}$ $(\nu=1,2)$ be two functors of prestacks on $\cI$, and
 $\theta\cl\Psi_{1}\to\Psi_{2}$ a morphisms of functors.
\banum
\item $\theta$ defines a morphism of functors $\theta_{\infty}\cl\Psi_{1\;\infty}\to\Psi_{2\;\infty}$,
\item if $\theta$ is an isomorphism, then $\theta_{\infty}$ is an isomorphism as well.
\eanum
\end{lemma}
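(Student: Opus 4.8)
The plan is to define $\theta_{\infty}$ objectwise by the evident formula and then verify the two conditions making it a morphism of functors. Fix an object $F=\{(F_i,\varphi_s)\}_{i,s}$ of $\sts_{1}(\cI)$. By the construction in the proof of Lemma \ref{lemma1}, one has $\Psi_{\nu\;\infty}F=\{(\Psi_{\nu}(i)F_i,\widetilde{\varphi}^{\nu}_s)\}_{i,s}$ for $\nu=1,2$, where, for a morphism $s\cl i_1\to i_2$, the isomorphism $\widetilde{\varphi}^{\nu}_s\cl\restr^{2}_s\Psi_{\nu}(i_2)F_{i_2}\isoto\Psi_{\nu}(i_1)F_{i_1}$ is the composite of the inverse of $(\Psi_{\nu})_s$ (evaluated at $F_{i_2}$) with $\Psi_{\nu}(i_1)(\varphi_s)$. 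I would then set $\theta_{\infty}(F)\eqdot\{\theta(i)_{F_i}\cl\Psi_{1}(i)F_i\to\Psi_{2}(i)F_i\}_{i\in\cI}$, the family of components at the $F_i$ of the natural transformations $\theta(i)\cl\Psi_{1}(i)\to\Psi_{2}(i)$. It then remains to show that $\theta_{\infty}(F)$ is a morphism of $\sts_{2}(\cI)$ and that it is natural in $F$.

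For the first point I would check the compatibility condition of Definition \ref{def:2tprolim}, namely the equality $\theta(i_1)_{F_{i_1}}\circ\widetilde{\varphi}^{1}_s=\widetilde{\varphi}^{2}_s\circ\restr^{2}_s(\theta(i_2)_{F_{i_2}})$ for every $s\cl i_1\to i_2$. Since each $\widetilde{\varphi}^{\nu}_s$ factors through $\Psi_{\nu}(i_1)\restr^{1}_sF_{i_2}$, this square decomposes into: a left square whose verticals are $(\Psi_{1})_s^{-1}$ and $(\Psi_{2})_s^{-1}$ and whose horizontals come from $\theta$, which commutes because it is the inverse, evaluated at $F_{i_2}$, of the commuting square defining $\theta$ as a morphism of functors of prestacks; and a right square which is the naturality square of $\theta(i_1)\cl\Psi_{1}(i_1)\to\Psi_{2}(i_1)$ at the morphism $\varphi_s\cl\restr^{1}_sF_{i_2}\to F_{i_1}$ of $\sts_{1}(i_1)$. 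Pasting the two gives the required equality. For naturality in $F$: since composition in the two-limit categories $\sts_{\nu}(\cI)$ is computed componentwise, for a morphism $f=\{f_i\}\cl F\to F'$ of $\sts_{1}(\cI)$ the identity $\theta_{\infty}(F')\circ\Psi_{1\;\infty}(f)=\Psi_{2\;\infty}(f)\circ\theta_{\infty}(F)$ amounts to $\theta(i)_{F'_i}\circ\Psi_{1}(i)(f_i)=\Psi_{2}(i)(f_i)\circ\theta(i)_{F_i}$ for each $i\in\cI$, which is the naturality of $\theta(i)$ at $f_i$. This establishes (1).

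For (2), suppose $\theta$ is an isomorphism of functors of prestacks, so that each $\theta(i)\cl\Psi_{1}(i)\to\Psi_{2}(i)$ is an isomorphism of functors; then every component $\theta(i)_{F_i}$ is an isomorphism in $\sts_{2}(i)$, hence $\pi(\theta_{\infty}(F))=\{\theta(i)_{F_i}\}_{i}$ is an isomorphism in $\prod_{i\in\cI}\sts_{2}(i)$. By Lemma \ref{lemma3} the functor $\pi$ is conservative, so $\theta_{\infty}(F)$ is an isomorphism in $\sts_{2}(\cI)$; as $F$ is arbitrary, $\theta_{\infty}$ is an isomorphism of functors. The only step requiring genuine care is the pasting of the two squares in the first point, in particular keeping track of which of the isomorphisms $(\Psi_{\nu})_s$ must be inverted in the definition of $\widetilde{\varphi}^{\nu}_s$; everything else is formal, resting on Lemma \ref{lemma1} for the description of the $\Psi_{\nu\;\infty}$ and on Lemma \ref{lemma3} for the conclusion.
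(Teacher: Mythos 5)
Your proof is correct and is exactly the argument the paper has in mind: the paper's own proof of Lemma \ref{lemma4} is simply ``Obvious,'' and you have filled in the details in the expected way (componentwise definition of $\theta_{\infty}$, pasting the inverted prestack-compatibility square with the naturality square of $\theta(i_1)$ at $\varphi_s$, and invoking the conservativity of $\pi$ from Lemma \ref{lemma3} for part (2)). Nothing further is needed.
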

{\proof Obvious.}
\subsubsection*{Grothendieck prestacks}\label{section:Gr1}

Recall that a $\cor$-abelian prestack $\sts$ on a presite $\cI$ is a
prestack such that $\sts(i)$ is a $\cor$-abelian category for each
$i\in\cI$ and the restriction functors $\restr_s$ are exact. For a site
$\cI$, a $\cor$-abelian stack is a $\cor$-abelian
prestack which is a stack.
If there is no risk of confusion, we shall not mention
the field $\cor$.

\begin{definition}\label{def:Grprestack}{\rm(see~\cite{GS06})}
\bnum
\item
A Grothendieck prestack $\sts$ over $\cor$ on a
presite $\cI$ is a $\cor$-abelian
prestack such that the abelian category $\sts(i)$
 is a Grothendieck category for each object $i\in\cI$
and the restriction functor
$\restr_s$ commutes with small inductive
limits for each morphism $s\cl i\to j$. (Recall that $\restr_s$ is exact.)
\item
For a site $\cI$, a Grothendieck stack is a Grothendieck prestack which is a
stack.
\enum
\end{definition}

The next result  is a deep results of~\cite{SGA4} (see Expos{\'e} I, Theorem~9.22).

\begin{theorem}\label{th:Grstack1}
Let $\sts$ be a Grothendieck prestack on $\cI$. Then
$\sts(\cI)$  is a  Grothendieck category.
\end{theorem}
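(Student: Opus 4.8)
The plan is to verify the defining axioms of a Grothendieck category for $\sts(\cI)$: it is abelian, it admits small inductive limits, filtered inductive limits are exact, and it has a generator. The first three are essentially soft and follow from the local hypotheses combined with the results already in hand; the existence of a generator is the crux, and this is where the depth of \cite{SGA4} enters.

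First I would observe that $\sts(\cI)$ is $\cor$-abelian. Each $\sts(i)$ is abelian and each $\restr_s$ is exact by definition of a $\cor$-abelian prestack, so kernels and cokernels of a morphism $f=\{f_i\}$ in $\sts(\cI)$ can be computed componentwise: setting $(\Ker f)_i=\Ker f_i$, the isomorphisms $\varphi_s$ restrict, via exactness of $\restr_s$, to isomorphisms $\restr_s(\Ker f_{i_2})\isoto\Ker f_{i_1}$, and the cocycle condition \eqref{cond:cos_S(A)} is inherited. One checks the canonical map $\mathrm{Coim}\to\mathrm{Im}$ is an isomorphism componentwise. Next, since each $\sts(i)$ is Grothendieck it admits all small inductive and projective limits, and by Definition~\ref{def:Grprestack} the functors $\restr_s$ commute with small inductive limits (and are exact, hence commute with finite projective limits); Proposition~\ref{pro:prestack1} then gives that $\sts(\cI)$ admits small inductive limits, computed componentwise, and that each $\restr_i$ commutes with them. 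For exactness of filtered inductive limits in $\sts(\cI)$: a filtered colimit is computed componentwise by Proposition~\ref{pro:prestack1}, the functor $\pi$ of Lemma~\ref{lemma3} is exact and conservative, and in the Grothendieck category $\prod_{i\in\cI}\sts(i)$ filtered colimits are exact; since exactness of a sequence in $\sts(\cI)$ is detected by $\pi$, we conclude. (One must note $\prod_i\sts(i)$ is itself Grothendieck — a small product of Grothendieck categories is Grothendieck — so this step is legitimate.)

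The remaining and genuinely hard point is the existence of a generator in $\sts(\cI)$, and here I would simply invoke the cited theorem: this is precisely the content of \cite{SGA4}, Expos\'e~I, Theorem~9.22, which constructs, from generators $G_i$ of the $\sts(i)$ together with the adjoints of the restriction functors $\restr_s$ (these adjoints exist because $\restr_s$ commutes with small inductive limits and the categories are Grothendieck, by the special adjoint functor theorem), a small generating family for the two-limit category $\sts(\cI)$. I would sketch the idea rather than reproduce the proof: for each $i$ and each generator $G_i\in\sts(i)$ one "pushes forward" $G_i$ along all morphisms into $i$ to build an object of $\sts(\cI)$, using left adjoints $\restr_s^{\dagger}$ to $\restr_s$, and the resulting family, indexed over $\cI$ and over the generators, is generating because a subobject of $F=\{(F_i,\varphi_s)\}$ that receives no nonzero map from any of these objects must have $F_i'=0$ for every $i$, hence be zero by conservativity of $\pi$ (Lemma~\ref{lemma3}).

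I expect the main obstacle to be purely expository: the generator construction in \cite{SGA4} is technical, requiring care with the compatibility of the adjoints $\restr_s^{\dagger}$ with the composition isomorphisms $c_{s,t}$ (a dual version of the coherence encoded in \eqref{morp-prestack}), and with the fact that $\Fl(\cI)$-indexed data of Remark~\ref{rem:morinsts+} assemble correctly. Since the statement explicitly attributes this to loc.\ cit.\ and calls it "a deep result," the honest proof here is to reduce the Grothendieck axioms to the local hypotheses as above and then cite Theorem~9.22 of \cite{SGA4} for the generator, which is exactly what I would write.
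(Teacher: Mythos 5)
Your proposal matches the paper in substance: the paper offers no proof of this theorem at all, simply citing \cite{SGA4}, Expos\'e~I, Theorem~9.22, which is exactly where you send the reader for the generator, and your verification of the remaining axioms (abelian structure, small inductive limits, AB5) via componentwise computation, Proposition~\ref{pro:prestack1} and the conservative exact functor $\pi$ of Lemma~\ref{lemma3} is correct. One caution about your parenthetical sketch of the cited argument: a functor between Grothendieck categories commuting with small inductive limits acquires a \emph{right} adjoint from the adjoint functor theorem, not a left one, and the adjoints actually needed are left adjoints to the projections $\restr_i\cl\sts(\cI)\to\sts(i)$ (so that $\Hom(\lambda_iG_i,F)\simeq\Hom(G_i,F_i)$ and joint faithfulness of the $\restr_i$ yields generation), which is precisely the technical construction carried out in loc.\ cit.; since you explicitly defer that construction to the citation, this slip does not affect the validity of your proof.
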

If $\shc$ is an abelian category, we denote as usual by $\Derb(\shc)$
(resp.\ $\Derpb(\shc)$) its bounded
(resp.\ bounded from below) derived category. Hence, for each $i\in\cI$, the
functor $\restr_i\cl\sts(\cI)\to\sts(i)$ extends
as a functor $\tilde{\restr_i}\cl\Derpb(\sts^*(\cI))\to\Derpb(\sts^*(i))$ and one
checks easily that these functors define a functor
\eq\label{eq:derrho}
&&\tilde{\restr}\cl\Derpb(\sts(\cI))\to 2\prolim[i,s]\Derpb(\sts(i)).
\eneq

The $2\prolim$ of triangulated categories is no more triangulated in general; the functor $\tilde{\restr}$ is neither full nor faithful. However it would be interesting to check if this functor is conservative.
\section{Ran manifolds}\label{paragraph Ran}
We shall specialize the general theory of 2-limits presented in the previous section.
Let $\cI$ be a small category.
 \begin{definition}
 \banum
 \item
 A  real (resp. complex)  Ran-manifold indexed by $\cI$  is a projective system
 $X= \{X_i\}_{i\in\cI}$ of real (resp. complex) manifolds indexed by $\cI$
such that for every morphism $s\in \Hom[\cI](j,i)$ the corresponding morphism $\De_s:X_{i}\rightarrow X_{j}$ is a closed embedding.
\item
Let $X= \{X_{i}\}_{i\in\cI}$ and  $Y=\{Y_{i}\}_{i\in\cI}$ be two Ran-manifolds.
A morphism $f\cl X\to Y$  is defined as a collection of morphisms $\{f_i\}_{i\in\cI}$
such that all the diagrams below commute
\eq \label{mmorphism}
&&\xymatrix@C=5pc@R=3pc{
X_i\ar[r]^{f_i}\ar[d]_-{\De_s}&\ar[d]^-{\De_s}Y_i\\
X_j\ar[r]_{f_j}&Y_j
}\eneq
\eanum
\end{definition}

%

 For each morphism $s\cl j \to i$, with the corresponding embedding
$\De_s: X_{i}\hookrightarrow X_{j}$, we consider the following diagram
\beq\label{eq:diagDes}
\xymatrix@C=5pc@R=5pc{
T^{\ast}X_{i}\ar@{->}_-{\pi}[d] & \ar@{->}_-{\De_{sd}}[l]\ar@{->}^-{\pi}[d]
               X_{i}\times_{X_{j}}T^{\ast}X_{j}\ar@{^(->}^-{\De_{s\pi}}[r]&\ar@{->}^-{\pi}[d]T^{\ast}X_{j}\\
X_{i}\ar@{=}[r]&X_{i}\ar@{^(->}^{\De_s}[r]&X_{j}\\}
\eeq
We denote by $T^{\ast}_{s}X_{j}$ the kernel of $\De_{sd}$, that is, $\opb{\Delta_{sd}}T^*_{X_i}X_i$.
\begin{definition}\label{def:noncar1}
\banum
\item
We define $T^{\ast}X$ as the collection of all the diagrams~\eqref{eq:diagDes}.
\item
A family
$\{ {\Lambda_{i}} \}_{i\in \cI}$ of closed $\R^{+}$-conic subsets $\Lambda_{i}\subset T^{\ast}X_{i}$ satisfying
\beq
\Lambda_{i}\subset{\De_{sd}}(\opb{\De_{s\pi}}(\Lambda_{j}))
\eeq
for all $s\cl j\to i$ is said to be a closed $\R^{+}$-conic subset $\Lambda \subset T^{\ast} X$.
\item A closed $\R^{+}$-conic subset $\Lambda$ of $T^{\ast} X$ is said to be
transversal to the identity, if for each $s\cl j\to i$, the map $\Delta_s$ is
non characteristic with respect to $\Lambda_j$, that is,
$\opb{\De_{sd}}{T^*_{X_{i}}}X_{i}\cap\opb{\Delta_{s\pi}}\Lambda_j\subset
X_i\times_{X_j}T^*_{X_j}X_j$.
\eanum
\end{definition}

Consider a morphism $f\cl X\to Y$ of Ran-manifolds. It gives rise to the
commutative diagrams associated to $s\cl j\to i$:
\beq\label{eq:diagDes2}
\xymatrix@C=5pc@R=5pc{
T^*X_i&X_i\times_{X_j}T^*X_j\ar[l]\ar@{^(->}[r]&T^*X_j\\
X_i\times_{Y_i}T^*Y_i\ar_-{f_{id}}[u]\ar[d]^-{f_{i\pi}}
            &X_i\times_{Y_j}T^*Y_j\ar[l]\ar@{^(->}[r]\ar[u]\ar[d]\ar[lu]_-{\Delta_{f_s,d}}\ar[rd]^-{\Delta_{f_s,\pi}}
                                                   &X_j\times_{Y_j}T^*Y_j\ar_-{f_{jd}}[u]\ar[d]^-{f_{j\pi}}\\
T^*Y_i&Y_i\times_{Y_j}T^*Y_j\ar[l]\ar@{^(->}[r]&T^*Y_j.
}
\eeq
Notice that in Diagram~\ref{eq:diagDes2},  we have defined the maps
\eqn
&&\Delta_{f_s,d}\cl X_i\times_{Y_j}T^*Y_j\to T^*X_i,\quad
\Delta_{f_s,\pi}\cl X_i\times_{Y_j}T^*Y_j\to T^*Y_j.
\eneqn
where $f_s\cl X_{i}\to Y_{j}$ is given by the sequence of morphisms of manifolds $\xymatrix{X_{i}\ar[r]^{f_i}& Y_{i}\ar[r]^{\De^{(Y)}_s}& Y_{j}}$.
\begin{definition}\label{def:noncar2}
Let us fix a closed $\R^{+}$-conic subset $\Lambda \subset T^{\ast} Y$.

We say that $\Lambda$ is transversal to $f$ if for each $s\cl j\to i$, the map $f_s\cl X_{i}\to Y_{j}$ is non-characteristic with respect to $\Lambda_j$, that is
$\opb{\Delta_{f_s d}}(T^*_{X_i}X_i)\cap\opb{\Delta_{f_s \pi}}(\Lambda_j)\subset
X_i\times_{Y_j}T^*_{Y_j}Y_j$.
\end{definition}
Note that to be transversal is stronger than to be non characteristic.
When $f\cl X\to Y=X$ is the identity, we recover the notion of
Definition~\ref{def:noncar1}~(c).
\begin{notation}{We will denote by $\De^{(Y)}_s,\;\De^{(X)}_s,\;\forall s\in \Hom(\cI),$ the closed embeddings of $Y$ and $X$ respectively. However when it is clear from the context we will omit the superscript.}
\end{notation}
\begin{lemma}\label{transversal lemma} Let $\Lambda$ be a closed $\R^+$-conic subset of $T^*Y$; moreover assume $\Lambda$ is transversal to $f$. Then for every $s \cl j\to i$
\banum
\item $\De^{(Y)}_s$ is non-characteristic for $\Lambda_j$ in a neighborhood of $f_i(X_i)$;
\item $\De^{(X)}_s$ is non-characteristic for $f_{jd}f_{j\pi}^{-1}\big(\Lambda_j\big)$.
\eanum
\end{lemma}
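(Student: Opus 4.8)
The plan is to reduce both assertions to the single hypothesis that $\Lambda$ is transversal to $f$, which by Definition~\ref{def:noncar2} says precisely that for every $s\cl j\to i$ the morphism $f_s\cl X_i\to Y_j$ is non-characteristic for $\Lambda_j$, and to exploit the two factorizations $f_s=\De^{(Y)}_s\circ f_i=f_j\circ\De^{(X)}_s$ coming from the commutative square~\eqref{mmorphism}. Everything beyond this is a routine chase through the codifferential maps of Diagram~\eqref{eq:diagDes2}, except for one genuinely non-formal point discussed at the end.

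\textbf{Part (a).} First I would isolate the following elementary left-cancellation fact: if $h\cl A\to B$ and $g\cl B\to C$ are morphisms of manifolds and $g\circ h$ is non-characteristic for a closed $\R^+$-conic $\Lambda\subset T^*C$, then $g$ is non-characteristic for $\Lambda$ at every point of $h(A)$. To prove it, fix $a\in A$, put $b=h(a)$, and suppose $\eta\in T^*_BC$ over $b$ satisfies $\eta\in\Lambda$ and $\eta\neq 0$; viewing $(a,\eta)$ as a point of $A\times_CT^*C$ over $a$ and using that the codifferential of $g\circ h$ factors as $h_d$ preceded by the pullback of $g_d$ along $h$, one gets $(g\circ h)_d(a,\eta)=h_d(a,g_d(b,\eta))=h_d(a,0)=0$ since $\eta\in\ker g_d$, while $(g\circ h)_\pi(a,\eta)=\eta\in\Lambda$; hence $(a,\eta)$ lies in $\opb{(g\circ h)_\pi}(\Lambda)\cap T^*_AC$, which is contained in the zero section by hypothesis, so $\eta=0$, a contradiction. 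Applying this with $h=f_i$, $g=\De^{(Y)}_s$, $C=Y_j$, $\Lambda=\Lambda_j$ and using $f_s=\De^{(Y)}_s\circ f_i$, I obtain that $\De^{(Y)}_s$ is non-characteristic for $\Lambda_j$ at every point of $f_i(X_i)$. To upgrade this to a neighborhood, I would show that the locus $N\subset Y_i$ where $\De^{(Y)}_s$ fails to be non-characteristic for $\Lambda_j$ is closed: $N$ is the image in $Y_i$ of the set $\bigl(\opb{\De^{(Y)}_{s\pi}}(\Lambda_j)\cap T^*_{Y_i}Y_j\bigr)\setminus Y_i$, and since that set is closed and $\R^+$-conic, normalizing covectors with respect to a local fibre metric and invoking local compactness of the unit sphere bundle shows that a limit of points of $N$ again lies in $N$. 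Then $Y_i\setminus N$ is an open neighborhood of $f_i(X_i)$ on which $\De^{(Y)}_s$ is non-characteristic for $\Lambda_j$.

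\textbf{Part (b).} Here I would use the complementary fact: if $u\cl X_i\to X_j$ and $v\cl X_j\to Y_j$ are morphisms and $v\circ u$ is non-characteristic for $\Lambda\subset T^*Y_j$, then $u$ is non-characteristic for $v_dv_\pi^{-1}(\Lambda)\subset T^*X_j$. Indeed, let $(x_i,\zeta)\in X_i\times_{X_j}T^*X_j$ with $\zeta\in v_dv_\pi^{-1}(\Lambda)$ and $u_d(x_i,\zeta)=0$; then $\zeta=v_d(u(x_i),\eta)$ for some $\eta\in\Lambda$ over $v(u(x_i))$, the point $(x_i,\eta)\in X_i\times_{Y_j}T^*Y_j$ satisfies $(v\circ u)_d(x_i,\eta)=u_d(x_i,\zeta)=0$ and $(v\circ u)_\pi(x_i,\eta)=\eta\in\Lambda$, so $(x_i,\eta)$ lies in $\opb{(v\circ u)_\pi}(\Lambda)\cap T^*_{X_i}Y_j$, which is the zero section by hypothesis; hence $\eta=0$ and $\zeta=v_d(u(x_i),0)=0$. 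Applying this with $u=\De^{(X)}_s$, $v=f_j$, $\Lambda=\Lambda_j$, observing that $v_dv_\pi^{-1}(\Lambda_j)=f_{jd}f_{j\pi}^{-1}(\Lambda_j)$ and that $f_s=f_j\circ\De^{(X)}_s$ is non-characteristic for $\Lambda_j$ by transversality, I conclude that $\De^{(X)}_s$ is non-characteristic for $f_{jd}f_{j\pi}^{-1}(\Lambda_j)$.

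The bookkeeping of the various fibre products and $d/\pi$-maps in~\eqref{eq:diagDes2} is where care is needed, but nothing there is deep; the cancellation facts above are purely formal consequences of the functoriality of the codifferential. The one genuinely non-formal step is the openness argument in (a), where passing from a pointwise non-characteristic condition along $f_i(X_i)$ to one on an open neighborhood rests on the closedness of the characteristic locus $N$, hence on the $\R^+$-conic structure together with local compactness. In (b) no closedness is needed if $f_{jd}f_{j\pi}^{-1}(\Lambda_j)$ is read as a possibly non-closed conic set; if a closed set is wanted, one first applies the left-cancellation fact to $f_s=f_j\circ\De^{(X)}_s$ and the same openness argument in $X_j$ to see that $f_j$ is non-characteristic for $\Lambda_j$ on a neighborhood of $\De^{(X)}_s(X_i)$, over which $f_{jd}$ restricted to $f_{j\pi}^{-1}(\Lambda_j)$ is proper and therefore has closed image.
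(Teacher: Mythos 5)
Your proof is correct and follows essentially the same route as the paper: both parts rest on the factorizations $f_s=\De^{(Y)}_s\circ f_i$ and $f_s=f_j\circ\De^{(X)}_s$ together with the standard composition lemma for non-characteristic morphisms (if $g\circ h$ is non-characteristic for $\Lambda$, then $g$ is non-characteristic for $\Lambda$ near $h(A)$ and $h$ is non-characteristic for $g_dg_\pi^{-1}(\Lambda)$). The only difference is that the paper simply invokes this lemma as Lemma~4.10, p.~65 of \cite{Ka03}, whereas you reprove it from scratch, including the cosphere-bundle compactness argument for the open-neighborhood refinement in part (a).
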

\begin{proof}
\banum
\item

By hypothesis the morphism $f_s$ is non-characteristic for $\Lambda_{j}$. The result follows from Lemma $4.10$ pag.$65$ in \cite{Ka03} applied to the following commutative diagram:
\eq
\xymatrix{T^*X_{i}&\ar[l]_-{f_{id}}X_{i}\times_{Y_{i}}TY_{i}\ar[d]_{f_{i\pi}}&\ar[l]_{\varphi}X_{i}\times_{Y_{j}}T^*Y_{j}\ar[d]^{\psi}\\
& T^*Y_{i}&\ar[l]_{\De_{sd}}Y_{i}\times_{Y_{j}}T^*Y_{j}\ar[d]^{\De_{s\pi}}\\
& & T^*Y_{j}.}
\eneq
\item Same argument applied to $f_s=f_j \circ\De^{(X)}_s$.
\eanum

\end{proof}
 \section{Sheaves on Ran-manifolds}
Let $X$ be a Ran-manifold and $\cI$ a small category. Consider $\Derb(\cor_{X_i})$ the bounded derived category of $\md[\cor_{X_i}]$.
Define the functor $\sts\cl\cI^{op}\to \Cat$ as follows:
\banum
\item to any $i\in \cI$ set $\Derb(\cor_{X_{i}});$
\item for any morphism $s\cl i_{1}\to i_{2}$ set $\restr_s:\Derb(\cor_{X_{i_1}})\to\Derb(\cor_{X_{i_2}})$, with $\restr_s:=\opb{\De_s};$
\item for $s\cl i_{1}\to i_{2},\;t\cl i_{2}\to i_{3}$ the isomorphism of functors $c_{s,t}$ is
$\opb{\De_t}\circ\opb{\De_s}\simeq\opb{\De_{t\circ s}}$.
\eanum

 \begin{definition}\label{def:shvonR}
 We set
 \eqn
\Derb(\cor_X)\eqdot\tprolim[i,\,s](\Derb(\cor_{X_i}),\De_s^{-1} ).
\eneqn
\end{definition}
Hence, an object $\Derb(\cor_X)$ is the data of $\{(F_i,\varphi_s)\}_{i,s}$
($i\in\cI,s\in\Mor(\cI)$) with $F_i\in\Derb(\cor_{X_i})$
and $\varphi_s\cl\opb{\De_s}F_j\isoto F_i$ satisfying the compatibility conditions in def $\ref{def:2tprolim}$.


\subsubsection*{Inverse images}
Let  $f\cl X\to Y$ be a morphism of Ran manifolds.

\begin{definition}\label{def:extoper}
The inverse image functor $\opb{f}\cl\Derb(\cor_Y) \to \Derb(\cor_X)$ is defined as follows.
 Given $G \in \Derb(\cor_Y), G=\{(G_{i},\psi_s)\}_{i,s}$ with $i\in\cI,s\in\Hom[\cI](j,i)$,
we set $\opb{f}G=\{(F_{i},\varphi_s)\}_{i,s}$ where $F_i=\opb{f_i}G_i$ and
$\varphi_s\cl \opb{\De_s}F_j\simeq F_i$ is given by the isomorphisms
\eqn
&&\opb{\De_s}F_j=\opb{\De_s}\opb{f_j}G_j\simeq\opb{f_i}\opb{\De_s}G_j\simeq\opb{f_i}G_i=F_i.
\eneqn
\end{definition}


%
%

\section{$\shd$-modules on complex Ran-manifolds}

The $\infty$-category of $\shd$-modules on Ran spaces has been introduced and studied  in~\cite{FG11}.
We propose here a more elementary approach.\\ First we shall adapt Definition~\ref{def:shvonR}
in the $\shd$-modules setting.
Let $X$ be a Ran-manifold and $\cI$ a small category.
Define the functor $\sts\cl\cI^{op}\to \Cat$ by
\banum
\item to $i\in \cI$, $\sts(i)=\Derb(\shd_{X_{i}});$
\item for any morphism $s\cl i_{1}\to i_{2}$, $\restr_s:\Derb_{\coh}(\shd_{X_{i_1}})\to\Derb_{\coh}(\shd_{X_{i_2}})$ is the functor $\dopb{\De}_s;$
\item for $s\cl i_{1}\to i_{2},\;t\cl i_{2}\to i_{3}$, the isomorphism of functors $c_{s,t}$ is the obvious one
$\dopb{\De}_t\circ\dopb{\De}_s\simeq\dopb{\De}_{t\circ s}$.
\eanum
\begin{definition}We set
$$\Derb(\shd_{X})=\tprolim[i,\,s](\Derb(\shd_{X_i}),\dopb{\De}_s).$$
\end{definition}
\begin{definition}Denote by $\Derb_{\coh\;\mathrm{nc}}(\shd_{X_j})$ the full triangulated subcategory of $\Derb_{\coh}(\shd_{X_j})$ whose objects $\shm_{j}$ are such that for every $ s\in \Hom[\cI](j,i)$ the corresponding morphism $\De_s\cl X_i\to X_j$ is non-characteristic with respect to  $\shm_{j}$.
\end{definition}
\begin{definition}We set
$$\Derb_{\coh\;\mathrm{nc}}(\shd_{X})=\tprolim[i,\,s](\Derb_{\coh\;\mathrm{nc}}(\shd_{X_i}),\dopb{\De}_s).$$
\end{definition}

Let $X$ and $Y$ be two complex Ran-manifolds and $f\cl X \to Y$.
We shall mimic the previous definitions
\begin{definition}Denote with $\Derb_{\coh\;f-\mathrm{nc}}(\shd_{Y_j})$ the full triangulated subcategory of $\Derb_{\coh}(\shd_{Y_j})$ whose objects $\shn_{j}$ are such that for every $ s\in \Hom[\cI](j,i)$ the corresponding morphism $f_{s}=f_{j}\circ \De^{(X)}_s\cl X_i\to Y_j$ is non-characteristic with respect to $\shn_{j}$.
\end{definition}

\begin{definition}\label{def:shvonC}
 We set $\Derb_{\coh\;f-\mathrm{nc}}(\shd_{Y})$ as the $2$-limit category
$$\tprolim[i,\,s](\Derb_{\coh\;f-\mathrm{nc}}(\shd_{Y_i}),\dopb{\De}_s).$$
\end{definition}
Of course the two definitions coincide in the case $f=id$.
\begin{example}\label{exa:shoXD}
Let $X$ be a complex Ran-manifold .
Since $\dopb{\De}_s\sho_{X_j}\simeq \sho_{X_i}$, the family
$\{\sho_{X_i}\}_i$ defines an object in $\Derb_{\coh}(\shd_X)$.
\end{example}
\begin{lemma} Any object $\shn \in \Derb_{\coh\;f-\mathrm{nc}}(\shd_{Y})$ belongs to the subcategory of $\Derb_{\coh}(\shd_{Y})$ consisting of objects s.t. for every $s\cl j\to i$,
$\De_s$ is non-characteristic for $\shn_{j}$ in a neighborhood of $f_{i}(X_i)$.
\end{lemma}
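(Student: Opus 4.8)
The plan is to read the statement off from Lemma~\ref{transversal lemma}~(a) after passing to characteristic varieties. Write the given object as $\shn=\{(\shn_i,\varphi_s)\}_{i,s}$ with $\shn_i\in\Derb_{\coh\;f-\mathrm{nc}}(\shd_{Y_i})$ and $\varphi_s\cl\dopb{\De^{(Y)}_s}\shn_j\isoto\shn_i$ for each $s\cl j\to i$. By the very definition of $\Derb_{\coh\;f-\mathrm{nc}}(\shd_{Y_j})$, for every $s\cl j\to i$ the morphism $f_s=f_j\circ\De^{(X)}_s\cl X_i\to Y_j$ is non-characteristic with respect to $\shn_j$, i.e.\ with respect to $\Lambda_j\eqdot\chv(\shn_j)$, which is a closed $\C^\times$-conic (hence $\R^+$-conic) complex analytic subset of $T^*Y_j$. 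Thus the only thing to prove is: for every $s\cl j\to i$, the embedding $\De^{(Y)}_s\cl Y_i\hookrightarrow Y_j$ is non-characteristic for $\Lambda_j$ on a neighborhood of $f_i(X_i)$.

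I would first check that the family $\Lambda\eqdot\{\Lambda_i\}_{i\in\cI}$ is a closed $\R^+$-conic subset of $T^*Y$ in the sense of Definition~\ref{def:noncar1}~(b) which is transversal to $f$ in the sense of Definition~\ref{def:noncar2}, and then invoke Lemma~\ref{transversal lemma}~(a). Transversality to $f$ is nothing but the standing hypothesis, namely that each $f_s$ is non-characteristic for $\Lambda_j=\chv(\shn_j)$. For the compatibility $\Lambda_i\subset\De^{(Y)}_{sd}\big(\opb{\De^{(Y)}_{s\pi}}(\Lambda_j)\big)$, with $\De^{(Y)}_{sd},\De^{(Y)}_{s\pi}$ the maps attached to $\De^{(Y)}_s\cl Y_i\hookrightarrow Y_j$ as in~\eqref{eq:diagDes}, I would use the isomorphism $\varphi_s$ together with the standard estimate for the characteristic variety of an inverse image, $\chv(\dopb{g}\shm)\subset g_d\big(\opb{g_\pi}(\chv(\shm))\big)$, valid for a morphism $g$ of complex manifolds whenever $\dopb{g}\shm$ has coherent cohomology (\cite{Ka03}): applied to $g=\De^{(Y)}_s$ and $\shm=\shn_j$ — note $\dopb{\De^{(Y)}_s}\shn_j\simeq\shn_i$ is coherent — this gives $\Lambda_i=\chv(\shn_i)=\chv(\dopb{\De^{(Y)}_s}\shn_j)\subset\De^{(Y)}_{sd}\big(\opb{\De^{(Y)}_{s\pi}}(\Lambda_j)\big)$. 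Once $\Lambda$ is recognized as a conic subset of $T^*Y$ transversal to $f$, Lemma~\ref{transversal lemma}~(a) says exactly that $\De^{(Y)}_s$ is non-characteristic for $\Lambda_j=\chv(\shn_j)$, hence for $\shn_j$, in a neighborhood of $f_i(X_i)$, for every $s\cl j\to i$; this is the claim.

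There is essentially no genuine obstacle: the statement is Lemma~\ref{transversal lemma}~(a) reread for the characteristic variety of a coherent module, and the content is merely that membership of $\shn$ in $\Derb_{\coh\;f-\mathrm{nc}}(\shd_Y)$ packages exactly the per-$s$ non-characteristicity of the maps $f_s$ that this lemma consumes. If one wishes to avoid any use of Definition~\ref{def:noncar1}~(b), one can bypass the verification that $\{\chv(\shn_i)\}$ is a $\Lambda\subset T^*Y$ and instead repeat verbatim, with $\Lambda_j\eqdot\chv(\shn_j)$, the one-line argument in the proof of Lemma~\ref{transversal lemma}~(a) — that is, apply Lemma~$4.10$, p.~$65$ of \cite{Ka03} to the factorization $f_s=\De^{(Y)}_s\circ f_i$ — since that argument uses only the non-characteristicity of the single map $f_s$ with respect to $\Lambda_j$. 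The one point worth flagging is that being $f$-non-characteristic is strictly stronger than being non-characteristic (cf.\ the remark following Definition~\ref{def:noncar2}), which is precisely what allows this sharper statement about objects of $\Derb_{\coh}(\shd_Y)$.
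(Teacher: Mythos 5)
Your proof is correct and follows the paper's own route: the paper's proof is precisely the one-line reduction to Lemma~\ref{transversal lemma}~(a) applied to $\Lambda_j=\chv(\shn_j)$, with transversality to $f$ supplied by the definition of $\Derb_{\coh\;f-\mathrm{nc}}(\shd_Y)$. The only difference is that you additionally verify that $\{\chv(\shn_i)\}_i$ is a closed conic subset of $T^*Y$ in the sense of Definition~\ref{def:noncar1}~(b), a point the paper leaves implicit and which, as you yourself observe, is not actually needed since the proof of Lemma~\ref{transversal lemma}~(a) only consumes the non-characteristicity of each $f_s$ for $\Lambda_j$.
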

\begin{proof}{It is a consequence of Lemma \ref{transversal lemma}.}
\end{proof}
\subsubsection*{The inverse image functor $\dopb{f}$}
Let $f\cl X\to Y$ be a morphism of Ran-manifolds.
We will need the following result (which holds in view of Lemma \ref{transversal lemma}).
\begin{lemma}\label{preliminary lemma} For all $s \cl j \to i$ the corresponding
map $\De_s \cl X_{i}\to X_{j}$ is non characteristic for the corresponding $\dopb{f}_{j}\shn_{j}$.
\end{lemma}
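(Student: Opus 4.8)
The statement to be proved is Lemma \ref{preliminary lemma}: for every $s\cl j\to i$ in $\cI$, the closed embedding $\De^{(X)}_s\cl X_i\to X_j$ is non-characteristic for $\dopb{f}_j\shn_j$, where $\shn=\{(\shn_i,\psi_s)\}_{i,s}\in\Derb_{\coh\;f-\mathrm{nc}}(\shd_Y)$ and $f\cl X\to Y$ is the given morphism of Ran-manifolds. The plan is to translate this condition on characteristic varieties into a statement purely about the closed embeddings $\De^{(X)}_s$, $\De^{(Y)}_s$ and the maps $f_i$, and then to invoke Lemma \ref{transversal lemma}(b) together with Kashiwara's non-characteristic inverse image theorem (Proposition \ref{Ka1}).

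\medskip

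\textbf{Step 1: Compute $\chv(\dopb{f}_j\shn_j)$.} Since $\shn\in\Derb_{\coh\;f-\mathrm{nc}}(\shd_Y)$, each $\shn_j$ lies in $\Derb_{\coh\;f-\mathrm{nc}}(\shd_{Y_j})$, so in particular $f_j\cl X_j\to Y_j$ is non-characteristic for $\shn_j$ (take $s=\id_j$). Hence $\dopb{f}_j\shn_j$ is a coherent $\shd_{X_j}$-module concentrated in degree $0$ by Proposition \ref{Ka1}(a),(b), and by Proposition \ref{Ka1}(c) its characteristic variety is
\[
\chv(\dopb{f}_j\shn_j)=f_{jd}\,f_{j\pi}^{-1}\,\chv(\shn_j),
\]
using the cotangent diagram \eqref{diag:cotgmor} for the map $f_j\cl X_j\to Y_j$.

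\medskip

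\textbf{Step 2: Reduce to Lemma \ref{transversal lemma}(b).} It now suffices to show that $\De^{(X)}_s\cl X_i\to X_j$ is non-characteristic for the closed $\R^+$-conic set $f_{jd}f_{j\pi}^{-1}\chv(\shn_j)\subset T^*X_j$. But this is exactly what Lemma \ref{transversal lemma}(b) asserts, applied to $\Lambda=\chv(\shn)$: indeed, $\Lambda=\{\chv(\shn_i)\}_i$ is a closed $\R^+$-conic subset of $T^*Y$ (the compatibility $\chv(\shn_i)\subset\De^{(Y)}_{sd}(\De^{(Y)-1}_{s\pi}\chv(\shn_j))$ following from the isomorphisms $\psi_s\cl\dopb{\De^{(Y)}}_s\shn_j\isoto\shn_i$ combined with Proposition \ref{Ka1}(c) once we know $\De^{(Y)}_s$ is non-characteristic for $\shn_j$), and $\Lambda$ is transversal to $f$ precisely because $\shn\in\Derb_{\coh\;f-\mathrm{nc}}(\shd_Y)$ means each $f_s=f_j\circ\De^{(X)}_s$ is non-characteristic for $\shn_j$, which is Definition \ref{def:noncar2}. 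So Lemma \ref{transversal lemma}(b) gives that $\De^{(X)}_s$ is non-characteristic for $f_{jd}f_{j\pi}^{-1}\chv(\shn_j)=\chv(\dopb{f}_j\shn_j)$, which is the claim.

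\medskip

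\textbf{Main obstacle.} The delicate point is the passage in Step 2 from the hypothesis ``$f_s$ is non-characteristic for $\shn_j$ for all $s$'' to the verification that $\Lambda=\{\chv(\shn_i)\}_i$ is genuinely a closed $\R^+$-conic subset of $T^*Y$ in the sense of Definition \ref{def:noncar1}(b), since this requires knowing that each $\De^{(Y)}_s$ is itself non-characteristic for $\shn_j$ (so that Proposition \ref{Ka1}(c) applies to $\dopb{\De^{(Y)}}_s\shn_j\simeq\shn_i$). This is supplied by Lemma \ref{transversal lemma}(a), which says $\De^{(Y)}_s$ is non-characteristic for $\chv(\shn_j)$ in a neighborhood of $f_i(X_i)$ — and since $\De^{(X)}_s$ has image in $X_i$ and everything in sight is a germ along $f_i(X_i)$ after applying $\dopb{f}_j$ (more precisely, $\dopb{f}_j\shn_j$ only sees $\shn_j$ near $f_j(X_j)\supset f_i(X_i)$), this neighborhood is harmless. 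One should be careful to state the lemma, and carry its proof, with this ``in a neighborhood of $f_i(X_i)$'' qualification throughout, exactly as in Lemma \ref{transversal lemma}(a); otherwise the argument is a direct concatenation of Proposition \ref{Ka1} and Lemma \ref{transversal lemma}.
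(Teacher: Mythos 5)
Your proof is correct and follows the paper's route exactly: the paper's entire justification for this lemma is the parenthetical remark that it ``holds in view of Lemma \ref{transversal lemma}'', and your Steps 1--2 (taking $s=\id_j$ to see that $f_j$ is non-characteristic for $\shn_j$, applying Proposition \ref{Ka1}(c) to identify $\chv(\dopb{f}_j\shn_j)=f_{jd}f_{j\pi}^{-1}\chv(\shn_j)$, then invoking Lemma \ref{transversal lemma}(b)) are precisely the intended expansion. Your concern in the final paragraph is in fact avoidable: the proof of Lemma \ref{transversal lemma}(b) uses only that each $f_s=f_j\circ\De^{(X)}_s$ is non-characteristic for $\Lambda_j=\chv(\shn_j)$ --- which is the definition of $\Derb_{\coh\;f-\mathrm{nc}}(\shd_{Y_j})$ --- and never the compatibility inclusions of Definition \ref{def:noncar1}(b), so neither part (a) nor the ``neighborhood of $f_i(X_i)$'' qualification is actually needed for this lemma.
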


\begin{proposition}\label{le:mdfncthick}
The map ${f}$ induces a functor $\dopb{f}\cl \Derb_{\coh\;f-\mathrm{nc}}(\shd_Y)\to \Derb_{\coh\;\mathrm{nc}}(\shd_X)$
\end{proposition}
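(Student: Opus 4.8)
The plan is to realise $\dopb{f}$ as the functor induced on $2$-limit categories by a functor of prestacks on $\cI$, using Lemma~\ref{lemma1}. On the $Y$-side take the prestack $i\mapsto\Derb_{\coh\;f-\mathrm{nc}}(\shd_{Y_i})$ with restriction functors $\dopb{\De}^{(Y)}_s$ and composition isomorphisms $c^{Y}_{s,t}\cl\dopb{\De}^{(Y)}_t\circ\dopb{\De}^{(Y)}_s\simeq\dopb{\De}^{(Y)}_{t\circ s}$, and on the $X$-side the prestack $i\mapsto\Derb_{\coh\;\mathrm{nc}}(\shd_{X_i})$ with $\dopb{\De}^{(X)}_s$ and $c^{X}_{s,t}$; these are the prestacks whose $2$-limits were used to define $\Derb_{\coh\;f-\mathrm{nc}}(\shd_Y)$ and $\Derb_{\coh\;\mathrm{nc}}(\shd_X)$. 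The candidate functor of prestacks $\Phi$ is $\Phi(i)=\dopb{f_i}$ at each level, on objects and on morphisms.

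The first step --- and, I expect, the only one with real content --- is to check that $\dopb{f_i}$ does send $\Derb_{\coh\;f-\mathrm{nc}}(\shd_{Y_i})$ into $\Derb_{\coh\;\mathrm{nc}}(\shd_{X_i})$. Let $\shn_i$ be an object of the source. Taking $s=\id_i$ in the defining condition of $\Derb_{\coh\;f-\mathrm{nc}}(\shd_{Y_i})$ gives that $f_{\id_i}=f_i$ is non-characteristic for $\shn_i$, so by Proposition~\ref{Ka1} (applied to the cohomology modules of $\shn_i$) the complex $\dopb{f_i}\shn_i$ has coherent cohomology, hence lies in $\Derb_{\coh}(\shd_{X_i})$. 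Next, for every $s\cl i\to k$ in $\cI$, Lemma~\ref{preliminary lemma} --- equivalently Lemma~\ref{transversal lemma}(b) together with the identity $\chv(\dopb{f_i}\shn_i)=f_{id}f_{i\pi}^{-1}\chv(\shn_i)$ of Proposition~\ref{Ka1}(c) --- shows that $\De^{(X)}_s\cl X_k\to X_i$ is non-characteristic for $\dopb{f_i}\shn_i$. Therefore $\dopb{f_i}\shn_i\in\Derb_{\coh\;\mathrm{nc}}(\shd_{X_i})$, and functoriality in $\shn_i$ is immediate.

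Next I would equip $\Phi$ with its structure isomorphisms. For each morphism $s$ of $\cI$ the commuting square~\eqref{mmorphism} identifies $\De^{(Y)}_s\circ f$ with $f\circ\De^{(X)}_s$ at the relevant levels, and together with the compatibility of $\shd$-module inverse image with composition of morphisms this furnishes a canonical isomorphism $\Phi_s$ between the composite functors $\Phi\circ\dopb{\De}^{(Y)}_s$ and $\dopb{\De}^{(X)}_s\circ\Phi$. It remains to verify, for each composable pair $i_1\xrightarrow{\,s\,}i_2\xrightarrow{\,t\,}i_3$, the coherence hexagon~\eqref{morp-prestack} relating $\Phi_s$, $\Phi_t$, $\Phi_{t\circ s}$, $c^{X}_{s,t}$ and $c^{Y}_{s,t}$. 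This is the one routine-but-lengthy diagram chase; it follows formally from the associativity coherence of the isomorphisms $\dopb{g}\circ\dopb{h}\simeq\dopb{(h\circ g)}$ and the commutativity of~\eqref{mmorphism} for $s$, $t$ and $t\circ s$. Once $\Phi$ is a functor of prestacks, Lemma~\ref{lemma1} delivers $\Phi_\infty=\dopb{f}\cl\Derb_{\coh\;f-\mathrm{nc}}(\shd_Y)\to\Derb_{\coh\;\mathrm{nc}}(\shd_X)$: on objects it sends $\{(\shn_i,\psi_s)\}_{i,s}$ to $\{(\dopb{f_i}\shn_i,\varphi_s)\}_{i,s}$, where $\varphi_s$ is obtained by composing (the inverse of) $\Phi_s$ with $\dopb{f_{i}}(\psi_s)$ exactly as in the proof of that lemma, and it acts componentwise on morphisms.

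In short, all the geometric input sits in the second step --- Kashiwara's Proposition~\ref{Ka1} for coherence and the non-characteristic bookkeeping of Lemmas~\ref{transversal lemma} and~\ref{preliminary lemma}, which is exactly what forces $\dopb{f_i}$ into the subcategory $\Derb_{\coh\;\mathrm{nc}}(\shd_{X_i})$ --- while the prestack formalism then runs automatically through Lemma~\ref{lemma1}, the hexagon being tedious but purely formal.
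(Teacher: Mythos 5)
Your proof is correct and follows essentially the same route as the paper: define $\dopb{f}$ level-wise as $\{\dopb{f_i}\}_{i\in\cI}$, invoke Lemma~\ref{preliminary lemma} (resting on Lemma~\ref{transversal lemma} and Proposition~\ref{Ka1}) to see that each $\dopb{f_i}$ lands in $\Derb_{\coh\;\mathrm{nc}}(\shd_{X_i})$, obtain the functor-of-prestacks structure from the commuting squares~\eqref{mmorphism}, and conclude with Lemma~\ref{lemma1}. You merely spell out in more detail what the paper leaves implicit (the coherence of $\dopb{f_i}\shn_i$ via Proposition~\ref{Ka1} and the coherence hexagon~\eqref{morp-prestack}).
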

\begin{proof}
We set $\dopb{f}:=\{\dopb{f}_{i}\}_{i\in \cI}$. This inverse image functor is well-defined due to Lemma \ref{preliminary lemma}.
In view of Lemma \ref{lemma1} it is enough to prove that $\{\dopb{f}_{i}\}_{i\in\cI}$ is a functor of prestack.
In particular this means that each functor $\dopb{f}_i$ commutes with the restriction morphisms. By Definition \ref{mmorphism} of morphism of Ran manifolds we have $ {\De}_s \circ f_{i}=f_{j}\circ  {\De}_s$ and applying the inverse image functor we obtain the desired property $\dopb{f}_{i}\circ \dopb{\De}_s \simeq\dopb{\De}_s\circ \dopb{f}_{j}$.
\end{proof}

\subsubsection*{The functor $\rhom[\shd_{Y}](-,\sho_{Y})$}
\begin{lemma}
The functor $\rhom[\shd_{Y}](-,\sho_{Y}):=\{\rhom[\shd_{Y^i}](-,\sho_{Y^i})\}_{i\in I},$ $$\rhom[\shd_{Y}](-,\sho_{Y})\cl \Derb_{\coh}(\shd_Y)\to \Derb_{\coh}(\cor_X)$$
is well defined.
\end{lemma}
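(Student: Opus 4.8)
The plan is to exhibit $\rhom[\shd_{Y}](-,\sho_{Y})$ as the functor induced on $2$-limit categories, through Lemma~\ref{lemma1}, by a functor of prestacks; in this way the whole verification takes place on the individual manifolds $Y_{i}$. Write $\sts_{1}$ for the prestack $i\mapsto\Derb_{\coh}(\shd_{Y_{i}})$ with restriction functors $\dopb{\De}_{s}$, and $\sts_{2}$ for the sheaf prestack $i\mapsto\Derb(\cor_{Y_{i}})$ with restriction functors $\opb{\De}_{s}$ of Definition~\ref{def:shvonR}. Since $\rhom[\shd_{Y_{i}}](-,\sho_{Y_{i}})$ is contravariant in its argument, I would regard $\{\rhom[\shd_{Y_{i}}](-,\sho_{Y_{i}})\}_{i\in\cI}$ as a functor of prestacks from the pointwise opposite of $\sts_{1}$ to $\sts_{2}$; the formalism of Lemma~\ref{lemma1} applies verbatim, and the $2$-limit of the pointwise opposite of $\sts_{1}$ is the opposite of $\Derb_{\coh}(\shd_{Y})$, so the functor $\Phi_{\infty}$ it produces is exactly the one asserted --- composed, if one insists on the stated target $\Derb(\cor_{X})$, with the inverse image $\opb{f}\cl\Derb(\cor_{Y})\to\Derb(\cor_{X})$ of Definition~\ref{def:extoper} (the two levels being compatible by Lemma~\ref{lemma2}).

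The first thing to check is that each $\Phi(i)\cl\shn_{i}\mapsto\rhom[\shd_{Y_{i}}](\shn_{i},\sho_{Y_{i}})$ actually takes values in $\Derb(\cor_{Y_{i}})$: $\shd_{Y_{i}}$ is a coherent sheaf of rings of finite homological dimension, so a coherent $\shn_{i}$ admits locally a finite resolution by free $\shd_{Y_{i}}$-modules, and therefore $\rhom[\shd_{Y_{i}}](\shn_{i},\sho_{Y_{i}})$ is locally --- hence, $Y_{i}$ being finite dimensional, globally --- a bounded complex. The essential datum is the compatibility isomorphism: for $s\cl i_{1}\to i_{2}$ in $\cI$, with the corresponding closed embedding $\De_{s}\cl Y_{i_{1}}\hookrightarrow Y_{i_{2}}$, one needs an isomorphism of functors $\Phi(i_{1})\circ\dopb{\De}_{s}\isoto\opb{\De}_{s}\circ\Phi(i_{2})$, that is, using $\dopb{\De}_{s}\sho_{Y_{i_{2}}}\simeq\sho_{Y_{i_{1}}}$ (Example~\ref{exa:shoXD}),
\[
\rhom[\shd_{Y_{i_{1}}}]\bigl(\dopb{\De}_{s}(-),\sho_{Y_{i_{1}}}\bigr)\ \isoto\ \opb{\De}_{s}\,\rhom[\shd_{Y_{i_{2}}}]\bigl(-,\sho_{Y_{i_{2}}}\bigr) .
\]
This is precisely the Cauchy--Kovalevsky--Kashiwara Theorem~\ref{th:CKK} applied to $\De_{s}$. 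The point that requires care is that Theorem~\ref{th:CKK} presupposes $\De_{s}$ to be non-characteristic for the $\shd_{Y_{i_{2}}}$-module fed into the first variable: the canonical morphism $\opb{\De}_{s}\rhom[\shd_{Y_{i_{2}}}](-,\sho_{Y_{i_{2}}})\to\rhom[\shd_{Y_{i_{1}}}](\dopb{\De}_{s}(-),\sho_{Y_{i_{1}}})$ recalled before Theorem~\ref{th:CKK} always exists, but its invertibility --- equivalently, the fact that the above $\Phi_{s}$ genuinely lands in the $2$-limit $\Derb(\cor_{Y})$, in which the transition morphisms are required to be isomorphisms --- rests on that hypothesis. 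This is exactly what the non-characteristic conditions built into the source category guarantee (cf.\ the definition of $\Derb_{\coh\,\mathrm{nc}}$ and Lemma~\ref{transversal lemma}).

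The main obstacle is the coherence axiom~\eqref{morp-prestack}. Given $i_{1}\to[s]i_{2}\to[t]i_{3}$, one has two isomorphisms between $\rhom[\shd_{Y_{i_{1}}}](\dopb{\De}_{s}\dopb{\De}_{t}(-),\sho_{Y_{i_{1}}})$ and $\opb{\De}_{s}\opb{\De}_{t}\rhom[\shd_{Y_{i_{3}}}](-,\sho_{Y_{i_{3}}})$: one obtained by applying $\Phi_{t}$ and then $\Phi_{s}$, the other by using the transitivity isomorphisms $\dopb{\De}_{s}\dopb{\De}_{t}\simeq\dopb{\De}_{t\circ s}$ and $\opb{\De}_{s}\opb{\De}_{t}\simeq\opb{\De}_{t\circ s}$ together with $\Phi_{t\circ s}$; these must agree. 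This amounts to the transitivity, under composition of non-characteristic maps, of the isomorphism of Theorem~\ref{th:CKK}, which --- since $\Phi_{s}$ is, up to inversion, the canonical comparison morphism --- follows from the transitivity of $\dopb{(-)}$ and of $\opb{(-)}$. I expect the attendant diagram chase, which must carry along the associativity constraints $c^{\nu}_{s,t}$ of the two prestacks, to be the only laborious --- though entirely routine --- step. Once $\Phi$ is known to be a functor of prestacks, functoriality of each $\Phi_{s}$ in its argument immediately yields that a morphism of $2$-limit objects is carried to a morphism (the square of Definition~\ref{def:2tprolim}\,(b) stays commutative after $\rhom$ is applied), so Lemma~\ref{lemma1} applies and produces the functor $\rhom[\shd_{Y}](-,\sho_{Y})$, completing the proof.
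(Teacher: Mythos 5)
Your proof is correct and follows essentially the same route as the paper's: the transition isomorphisms $\Phi_s$ are supplied by the Cauchy--Kovalevsky--Kashiwara theorem (Theorem \ref{th:CKK}) applied to the closed embeddings $\De_s$, composed with the structure isomorphisms $\varphi_s$, and Lemma \ref{lemma1} then assembles these data into a functor on the $2$-limit. You are in fact more explicit than the paper on two points it leaves implicit --- the non-characteristicity hypothesis required for Theorem \ref{th:CKK} to give an isomorphism (so the source should really be the non-characteristic subcategory) and the coherence axiom \eqref{morp-prestack} --- but these are refinements of the same argument, not a different one.
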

\begin{proof}{Consider an object $\{\shn_{i},\varphi_s\}_{i,s}\in \Derb_{\coh}(\shd_Y)$; by the CKK theorem (Theorem \ref{th:CKK}), there is the isomorphism
$\De^{-1}_s\rhom[\shd_{Y_i}](\shn_i,\sho_{Y_i})\simeq \rhom[\shd_{Y_j}](\dopb{\De}_s\shn_i,\sho_{Y_j})$.
By the composition of the last isomorphism with $\varphi_s:\xymatrix{\dopb{\De}_s\shn_i\ar[r]^{\sim}&\shn_j\\}$ we obtain the isomorphism
$\xymatrix{\De^{-1}_s\rhom[\shd_{Y^i}](\shn_i,\sho_{Y_i})\ar[r]^{\sim} & \rhom[\shd_{Y_j}](\shn_j,\sho_{Y_j})}$. Lemma \ref{lemma1} completes the proof.
}
\end{proof}
\subsubsection*{The Cauchy-Kowaleski-Kashiwara theorem for Ran spaces}
\begin{theorem}
There is an isomorphism of functors of prestacks $$\xymatrix{\theta_{\infty}:f^{-1} \circ\rhom[\shd_{Y}](-,\sho_{Y})\ar[r]^{\sim}& \rhom[\shd_{Y}](-,\sho_{Y}) \circ \dopb{f}}.$$
In other words for any $\shn\in \Derb_{\coh\;f-\mathrm{nc}}(\shd_Y)$ we have the isomorphism $$\xymatrix{f^{-1}\rhom[\shd_{Y}](\shn,\sho_{Y})\ar[r]^{\sim}&\rhom[\shd_{X}](\dopb{f}\shn,\sho_{X})\\} $$
functorial in $\shn\in \Derb_{\coh\;f-\mathrm{nc}}(\shd_Y)$.
\end{theorem}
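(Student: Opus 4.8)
The strategy is to reduce the statement on Ran-manifolds to the classical Cauchy--Kowaleski--Kashiwara theorem (Theorem~\ref{th:CKK}) componentwise, and then assemble the componentwise isomorphisms into an isomorphism of functors of prestacks by invoking the formal machinery of \S\ref{prestacks}, notably Lemma~\ref{lemma1} and Lemma~\ref{lemma4}. Concretely, for each $i\in\cI$ we have a morphism of complex manifolds $f_i\cl X_i\to Y_i$ and, for $\shn=\{(\shn_i,\varphi_s)\}_{i,s}\in\Derb_{\coh\;f\text{-}\mathrm{nc}}(\shd_Y)$, each $\shn_i$ is a coherent $\shd_{Y_i}$-module; moreover, by definition of $\Derb_{\coh\;f\text{-}\mathrm{nc}}(\shd_Y)$ together with the case $s=\id$ of that definition, the morphism $f_i=f_i\circ\De^{(X)}_{\id}$ is non-characteristic for $\shn_i$. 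Hence Theorem~\ref{th:CKK} applies to each $f_i$ and yields an isomorphism in $\Derb(\cor_{X_i})$
\[
\theta(i)\cl\opb{f_i}\rhom[\shd_{Y_i}](\shn_i,\sho_{Y_i})\isoto\rhom[\shd_{X_i}](\dopb{f_i}\shn_i,\sho_{X_i}).
\]

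\textbf{Checking this is a morphism of functors of prestacks.} The first real task is to verify that the collection $\{\theta(i)\}_{i\in\cI}$ is compatible with the restriction data, i.e.\ that it defines a morphism of functors of prestacks in the sense of the Definition preceding Lemma~\ref{lemma4}. For a morphism $s\cl j\to i$ in $\cI$, one must check that the square relating $\theta(i)$ and $\theta(j)$ commutes after conjugation by the structural isomorphisms $\Phi_s$ of the two functors $\opb{f}\circ\rhom[\shd_Y](-,\sho_Y)$ and $\rhom[\shd_Y](-,\sho_Y)\circ\dopb{f}$. This amounts to the naturality of the classical CKK isomorphism with respect to the non-characteristic inverse image along $\De^{(X)}_s$ and $\De^{(Y)}_s$, combined with the base-change identities $\De^{(X)}_s{}^{*}\circ\opb{f_i}\simeq\opb{f_j}\circ\De^{(Y)}_s{}^{*}$ (coming from the commutativity of the squares~\eqref{mmorphism}) and $\dopb{\De}^{(X)}_s\circ\dopb{f}_i\simeq\dopb{f}_j\circ\dopb{\De}^{(Y)}_s$ (Proposition~\ref{le:mdfncthick}). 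The non-characteristic hypotheses needed to invoke CKK along the $\De_s$ are exactly those packaged in Lemma~\ref{transversal lemma} and Lemma~\ref{preliminary lemma}, so all intervening arrows are genuine isomorphisms and the diagram chase is legitimate. Once $\{\theta(i)\}$ is seen to be a morphism of functors of prestacks $\theta\cl f^{-1}\circ\rhom[\shd_Y](-,\sho_Y)\to\rhom[\shd_Y](-,\sho_Y)\circ\dopb{f}$, Lemma~\ref{lemma4} immediately produces the induced morphism of functors $\theta_\infty$ on the $2$-limit categories, and since each $\theta(i)$ is an isomorphism, Lemma~\ref{lemma4}(b) (or equivalently conservativity of $\pi$, Lemma~\ref{lemma3}) gives that $\theta_\infty$ is an isomorphism. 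Evaluating $\theta_\infty$ at a fixed $\shn$ yields the stated isomorphism $f^{-1}\rhom[\shd_Y](\shn,\sho_Y)\isoto\rhom[\shd_X](\dopb{f}\shn,\sho_X)$, functorial in $\shn$.

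\textbf{The main obstacle.} The routine parts are the two base-change identities and the invocation of Theorem~\ref{th:CKK}. The delicate point is the coherence of all these identifications: one must pin down the structural isomorphisms $\Phi_s$ of the two composite functors of prestacks precisely enough that the hexagon~\eqref{morp-prestack} holds, and then check that the CKK isomorphisms $\theta(i)$ intertwine them. In other words, the heart of the matter is a compatibility of three natural isomorphisms---the CKK isomorphism, the $\sho$-module (resp.\ $\shd$-module) base-change isomorphism, and the transitivity isomorphisms $c_{s,t}$ for $\De^{-1}$ (resp.\ $\dopb{\De}$)---over a composable pair $j\to i$, together with the verification that no sign or shift discrepancy appears when the non-characteristic inverse image is realized via the transfer bimodule $\shd_{X\to Y}$. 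This is formal but somewhat lengthy; it is entirely of the ``naturality of classical constructions'' type, and contains no new geometric input beyond what Lemma~\ref{transversal lemma} already supplies.
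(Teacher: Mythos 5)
Your proposal is correct and follows essentially the same route as the paper: reduce to the componentwise Cauchy--Kowaleski--Kashiwara isomorphisms $\theta(i)$ (using that each $f_i$ is non-characteristic for $\shn_i$, which is the case $s=\id$ of the definition of the $f$-non-characteristic subcategory) and then assemble them into $\theta_\infty$ via Lemma~\ref{lemma1} and Lemma~\ref{lemma4}. The paper's own proof is in fact terser than yours, leaving entirely implicit the prestack-compatibility checks that you at least name explicitly.
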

The theorem is visualized by the following quasi commutative diagram:
\eq
\xymatrix{\Derb_{\coh\;f-\mathrm{nc}}(\shd_Y)\ar[d]_{\rhom[\shd_{Y}](-,\sho_{Y})}\ar[r]^-{\dopb{f}}& \Derb_{\coh\;\mathrm{nc}}(\shd_X)\ar[d]^{\rhom[\shd_{X}](-,\sho_{X})}\\
\Derb(\cor_Y)\ar[r]^{f^{-1}} &\Derb(\cor_X). \\}
\eneq

\begin{proof} {In view of Lemmas \ref{lemma2}, \ref{lemma2}, \ref{lemma4}, it is enough to observe that each morphism $$\xymatrix{\theta_{i}:f^{-1}_{i} \circ\rhom[\shd_{Y_i}](-,\sho_{Y_i})\ar[r]^{\sim}& \rhom[\shd_{Y_i}](-,\sho_{Y_i}) \circ \dopb{f}_i}$$
is an isomorphism due to Theorem \ref{th:CKK}.}
\end{proof}

\subsubsection*{}

\begin{example}
 Let $\cI$ be the category of finite non-empty sets and
surjective maps. Let $X$ be a complex manifold.
The Ran space $Ran\;X$ in \cite{BD04} is defined as follows:
to $I\in\cI$ one associates the product manifold $X^I$ and to a surjection
$s\cl J\to I$ is associated the diagonal embedding
\eq
&&\delta_s\cl X^I\into X^J
\eneq
which maps $\{x_i\}_{i\in I}\in X^I$ to $\{x_j\}_{j\in J}\in X^J$ with
$x_j=x_i$ if $s(j)=i$.
\end{example}

\subsection*{Appendix}
There are two other natural categories, $\sts^+(\cI)$ and $\sts^-(\cI)$.

\begin{definition}\label{def:2tprolimpm}
\banum
\item
An object $F$ of $\sts^+(\cI)$ \lp resp.\ $\sts^-(\cI)$\rp\,
 is a family $\{(F_i, \phi_s)\}_{i,s}$
($i\in\cI$, $s\in\Fl(\cI)$) where
\bnum
\item
for any $i\in\cI$, $F_i$ is an object of $\sts(i)$,
\item
for any morphism $s\colon i_1\to i_2$ in $\cI$,
$\phi_s \cl F_{i_1} \to \restr_s (F_{i_2})$
\lp resp.\ $\phi_s \cl\restr_s (F_{i_2}) \to F_{i_1}$\rp\,
is a morphism such that
\begin{itemize}
\item
for all $i \in \cI$, $\phi_{\id_i} = \id_{F_i}$,
\item
for any sequence $i_1\to[s]i_2\to[t]i_3$ of morphisms in $\cI$,
the following diagram commutes
\eq\label{cond:cos_S(A)}
&& \vcenter{
\xymatrix{
{\restr_s \restr_t}(F_{i_3}) \ar@{-}[d]_-{c_{s,t}}^-\wr
           &&{\restr_s} (F_{i_2}) \ar[ll]_-{\restr_s(\phi_t)}\\
{\restr_{t\circ s}} (F_{i_3})
       &&F_{i_1}\ar[u]_-{\phi_{s}}\ar[ll]^-{\phi_{t\circ s}}
}}
\eneq
\lp resp.\
the following diagram commutes
\eq\label{cond:cos_S(A)b}
&& \vcenter{ \xymatrix{
\restr_s \restr_t (F_{i_3})
\ar@{-}[d]_-{c_{s,t}}^-\wr\ar[rr]^-{\restr_s(\phi_t)}
        &&\restr_s (F_{i_2})   \ar[d]^-{\phi_{s}}\\
\restr_{t\circ s} (F_{i_3}) \ar[rr]_-{\phi_{t\circ s}}
       &&F_{i_1}.)
}}
\eneq
\end{itemize}
\enum
\item
A morphism
$f\cl\{(F_i,\phi_s)\}_{i,s}\to\{(F'_i,\phi'_s)\}_{i,s}$
in $\sts^+(\cI)$ \lp resp.\ $\sts^-(\cI)$\rp\, is
a family of morphisms
$f_i\cl F_i\to F'_i$ such that
for any $s\cl i_1\to i_2$, the diagram below commutes:
\eq\label{diag:morsts+}
\vcenter{ \xymatrix{
\restr_s (F_{i_2})  \ar[rr]^-{\restr_s (f_{i_2}) }
             &&\restr_{s}(F'_{i_2})  \\
F_{i_1}\ar[rr]^-{f_{i_1}}  \ar[u]^-{\phi_s}
&&F'_{i_1} \ar[u]_-{\phi'_s} .  }
}
\eneq
\lp resp.\ the diagram below commutes:
\eq\label{diag:morsts-} \vcenter{ \xymatrix{
\restr_s (F_{i_2})  \ar[rr]^-{\restr_s (f_{i_2})}\ar[d]_-{\phi_s}
             &&\restr_{s}(F'_{i_2}) \ar[d]^-{\phi'_s}  \\
F_{i_1}\ar[rr]^-{f_{i_1}}
&&F'_{i_1} . ) }
}
\eneq
\item
We consider $\sts(\cI)$ as the full
subcategory of $\sts^+(\cI)$ or $\sts^-(\cI)$
consisting of
objects $\{(F_i, \phi_s)\}_{i\in\cI,s\in\Fl(\cI)}$ such
that for all $s\in\Fl(\cI)$, the
 morphisms $\phi_s$ are isomorphisms and
we denote by $\iota^+_\cI\cl \sts(\cI) \to \sts^+(\cI)$ and
$\iota^-_\cI\cl \sts(\cI) \to \sts^-(\cI)$ the natural
 faithful functors.
\eanum
\end{definition}

\providecommand{\bysame}{\leavevmode\hbox to3em{\hrulefill}\thinspace}

Giuseppe Bonavolont\`a,\\
Campus Kirchberg, Mathematics Research Unit\\ 6, rue R. Coudenhove-Kalergi, L-1359 Luxembourg
City

\begin{thebibliography}{10}


\bibitem[BD04]{BD04}A. Beilinson, V. Drinfeld,
{\em Chiral algebras}, volume 51 of
A. M. S. Colloquium Publications. American Mathematical Society, Providence, RI, 2004.


\bibitem[FG11]{FG11}J. Francis and D. Gaitsgory,
{\em Chiral Koszul duality,}
arXiv:1103.5803.


\bibitem[GS06]{GS06} S. Guillermou and P. Schapira,
{\em Operations on projective limits of categories of sheaves},
unpublished (2006).

\bibitem[Ka70]{Ka70} M.~Kashiwara,
{\em Algebraic study of systems of partial diffential equations,}
Thesis, Tokyo Univ. (1970),
translated by A.~D'Agnolo and J-P.~Schneiders,
M{\'e}moires Soc. Math. France {\bf 63} (1995).

\bibitem[Ka75]{Ka75} M.~Kashiwara,
{\em On the maximally overdetermined system of linear differential
equations I,}
Publ. Res. Inst. Math. Sci.  {\bf 10} (1974/75) 563--579.

\bibitem[Ka03]{Ka03} \bysame,
{\em D-modules and microlocal calculus}, volume 217 of Translations of Mathematical Monographs. American Mathematical Society, Providence, RI, 2003.

\bibitem[KS90]{KS90} M.~Kashiwara and P.~Schapira,
{\em Sheaves on manifolds, }
Grundlehren der Mathematischen Wissenschaften, \textbf{292}. Springer-Verlag, 1990, x+512 pp.

\bibitem [KS06]{KS06} \bysame,
{\em Categories and Sheaves}, volume 332
of Grundlehren der Mathematischen Wissenschaften [Fundamental Principles
of Mathematical Sciences]. Springer-Verlag, Berlin, 2006.

\bibitem[SGA4]{SGA4}S-G-A 4,
{\em Th\'eorie des topos et cohomologie \'etale des sch\'emas},
S\'em. G\'eom. Alg\'ebrique (1963-64) by M. Artin, A. Grothendieck and J.-L. Verdier,
Lecture Notes in Math. {\bf 260, 270, 305} Springer Verlag (1972/73).

\end{thebibliography}
\end{document}